\newcommand{\Rn}{\mathbb{R}}
\newcommand{\Nn}{\mathbb{N}}
\newcommand{\Lie}{\mathcal{L}} 
\newcommand{\scL}{\mathscr{L}}
\newcommand{\M}{\mathbb{M}}
\newcommand{\gs}{\mathcal{G}}
\newcommand{\es}{\mathcal{E}}
\newcommand{\hs}{\mathcal{H}}
\newcommand{\vs}{\mathcal{V}}
\DeclarePairedDelimiter{\abs}{\lvert}{\rvert}
\DeclarePairedDelimiter{\norm}{\lVert}{\rVert}
\DeclarePairedDelimiter{\ceil}{\lceil}{\rceil}
\DeclarePairedDelimiterX{\inp}[2]{\langle}{\rangle}{#1, #2}
\DeclarePairedDelimiter{\Mp}{\mathcal{M}_+(}{)}
\newtheorem{theorem}{Theorem}
\newtheorem{assumption}[theorem]{Assumption}% 
\newtheorem{remark}{Remark}
\title{\LARGE \bf
Peak Estimation of Hybrid Systems 
\\ with Convex Optimization
}
\renewcommand\footnotemark{}
\author{Jared Miller$^1$, Mario Sznaier$^1$
\thanks{$^1$J. Miller and M. Sznaier are with the Robust Systems Lab,  ECE Department, Northeastern University, Boston, MA 02115. (e-mails: miller.jare@northeastern.edu,  msznaier@coe.neu.edu).}
%Don't know if these are still valid
\thanks{ J. Miller and M. Sznaier were partially supported by NSF grants  CNS--1646121, CMMI--1638234, ECCS--1808381 and CNS--2038493,  and AFOSR grant FA9550-19-1-0005. 
This material is based upon research supported by the Chateaubriand Fellowship of the Office for Science \& Technology of the Embassy of France in the United States.
}
}
\date{\empty}
\begin{document}

\maketitle

\abstract{Peak estimation of hybrid systems aims to upper bound extreme values of a state function along trajectories, where this state function could be different in each subsystem. 
This finite-dimensional but nonconvex problem may be lifted into an  infinite-dimensional linear program (LP) in occupation measures with an equal objective under mild finiteness/compactness and smoothness assumptions. This LP may in turn be approximated by a convergent sequence of upper bounds attained from solutions of Linear Matrix Inequalities (LMIs) using the Moment-Sum-of-Squares hierarchy. The peak estimation problem is extended to problems with uncertainty and safety settings, such as measuring the distance of closest approach between points along hybrid system trajectories and unsafe sets.
    
    % \urg{More content in abstract. Mention extensions (e.g. safety, uncertainty, distance)?}
    
    % \urg{Springer desires a single file (\texttt{hybrid\_mcss.tex}) and strongly discourages the use of the \texttt{\\input} command. This will result in some messy editing.}
    
    % Safety of hybrid systems with respect to unsafe sets may be quantified with maximin peak estimation through the generalizaton of safety margins.
    % Uncertain dynamics may be incorporated into the hybrid system peak estimation problem, and the hybrid formulation can implement time-dependent disturbance processes with bounded derivative constraints. 
    % The solved bounds can be sharpened by decomposing the hybrid systems subsystems into smaller units with trivial reset maps, forming a partition-based approximation that can yield peak bounds at lower relaxation degrees.
}

%algorithm modifications

\algblock{Input}{EndInput}
\algnotext{EndInput}
\algblock{Output}{EndOutput}
\algnotext{EndOutput}
\newcommand{\Desc}[2]{\State \makebox[2em][l]{#1}#2}

% \keywords{Peak Estimation, Hybrid Systems, Occupation Measures, Linear Matrix Inequalities}

\maketitle

\section{Introduction}
\label{sec:intro}
% \urg{
% The destination of this paper will be in the Springer journal "Mathematics of Control, Signals, and Systems" as part of the soliciation of a special edition for Eduardo Sontag's 70th birthday.
% We are allowed up to 20 pages in this format (Springer Nature Template)
% }

This paper interprets and extends the peak estimation problem to dynamical systems with hybrid behavior. Peak estimation is the analysis problem of finding extremal values of state functions along system trajectories, such as finding the maximum speed of a craft given a set of initial conditions. 
A hybrid system is a dynamical system that possesses both continuous-time and discrete-time  dynamics \cite{goebel2009hybrid}. Hybrid systems have a wide array of applications, including walking robots \cite{collins2005efficient}, power converters \cite{zaupa2021resonant}, sampled-data control \cite{naghshtabrizi2006sampled}, and systems biology \cite{fischer2012redbloodcell}.
In this work (extending methods from \cite{zhao2019optimal}), the hybrid system is defined with respect to a series of spaces known as `locations' in which the hybrid trajectory evolves according to per-location \ac{ODE} dynamics. When the hybrid trajectory encounters a guard surface, it will transition to a (possibly) new location according to a reset map and continue its \ac{ODE} evolution. Peak estimation of hybrid systems equips each location with a state function, and the output of the peak estimation problem is the maximum state function value obtained across all locations by all hybrid systems trajectories starting from a set of initial conditions in a given time horizon. 

% [Main pitch of the paper. What is peak estimation and how can it be applied to hybrid systems?]

% [Prior Work: Hybrid System OCP \cite{van2000introduction}]

The \ac{ODE} peak estimation problem is an instance of an input-less \ac{OCP} with a free terminal time and zero running cost.
% , and is also a deterministic optimal stopping problem \cite{ferguson2006optimal, hill2009knowing}. 
In the \ac{ODE} case, such \acp{OCP} are finite-dimensional but generally nonconvex to solve. This difficulty is exacerbated by the addition of hybrid dynamics. In both cases for a maximization objective, lower bounds on the true peak cost can be computed by approximate sampling, while upper bounds must be satisfied for all admissible trajectories. 
The foundational work in \cite{lewis1980relaxation} represented \ac{ODE} \acp{OCP} as infinite-dimensional \acp{LP} in nonnegative Borel Measures, and gave necessary conditions under which the \ac{OCP} and its \ac{LP} outer-approximation have the same optimal value (no relaxation gap). The \acp{LP} in \cite{lewis1980relaxation} involve terminal measures and occupation measures, which describe all information needed to reconstruct families of trajectories \ac{ODE}.
The work in \cite{cho2002linear} treated the peak estimation problem as an infinite-dimensional \ac{LP} in measures, and proposed approximations solved based on successively refined finite-dimensional gridded \acp{LP}. In the peak estimation setting, the necessary conditions for no relaxation gap are mild compactness and Lipschitz regularity requirements. Another approach to solving infinite-dimensional \acp{LP} is through the Moment-\ac{SOS} hierarchy, which truncates the \acp{LP} into \acp{SDP} of increasing complexity to produce tightening outer approximations  \cite{lasserre2009moments, tacchi2022convergence}. The \ac{SOS} side of this hierarchy was used in \cite{fantuzzi2020bounding} to solve peak estimation problems. Other applications of the Moment-\ac{SOS} hierarchy in analysis and control of \acp{ODE} includes \acp{OCP} \cite{lasserre2008nonlinear}, reachable set estimation \cite{henrion2013convex}, and maximum control invariant set estimation \cite{korda2014convex}.
% The Moment-\ac{SOS} hierarchy is a method to produce upper bounds to measure \acp{LP} by a sequence of Linear Matrix Inequalities (\acp{SDP}) of increasing size \cite{lasserre2009moments}.
% [Prior Work: Measures and Peak Estimation \cite{lewis1980relaxation} \cite{cho2002linear} \cite{fantuzzi2020bounding} \cite{lasserre2008nonlinear} \cite{lasserre2009moments} \cite{henrion2003gloptipoly} \cite{henrion2013convex}]

Measures and the Moment-\ac{SOS} hierarchy have also been applied to solve problems featuring hybrid dynamical systems. Instances of these extensions include \acp{OCP} \cite{zhao2019optimal, zhao2019switching, rocca2018bio} and reachable sets \cite{shia2014reachable, mohan2016convex}.
% We note that \cite{piunovskiy2022gradual} treats the stochastic case for control of a continuous-time Markov Decision Process with gradual (\ac{ODE}) and impulsive (jump) inputs. 
Barrier functions to certify safety of hybrid system trajectories with respect to unsafe sets may also be found by \ac{SOS} programming \cite{prajna2004safety}.

% [Prior Work: Measures for Hybrid Systems \cite{shia2014reachable} \cite{zhao2019optimal} \cite{zhao2019switching} \cite{han2019controller} \cite{rocca2018bio} \cite{mohan2016convex} ]

% [Prior Work: Safety, including Hybrid Systems.  \cite{rantzer2004analysis}]

% Our contributions
The contributions of this paper are as follows:
\begin{itemize}
    \item Application of measure techniques to peak estimation for hybrid systems
    \item Use of Zeno caps to prevent unbounded executions
    \item A modular MATLAB framework for posing peak estimation problems for hybrid systems
    \item Extensions of existing work on peak estimation for uncertainty and safety analysis to hybrid systems
\end{itemize}

The paper is organized as follows. Section \ref{sec:prelim} introduces the notation convention and reviews background information. Section \ref{sec:hybrid_peak} formulates an infinite-dimensional measure program for peak estimation of hybrid system and its associated \ac{LMI} relaxation. Section \ref{sec:extensions} extends the hybrid peak estimation framework to safety analysis and possibly uncertain dynamical systems. Numerical examples are presented in Section \ref{sec:examples}. The paper is concluded in Section \ref{sec:conclusion}.

\section{Preliminaries}
\label{sec:prelim}

\begin{acronym}[WSOS]
% \acro{DDC}{Data Driven Control}

% \acro{BSA}{Basic Semialgebraic}
% \acro{GAS}{Globally Asymptotically Stable}

% \acro{CSP}{Correlative Sparsity Pattern}

\acro{LMI}{Linear Matrix Inequality}
\acroplural{LMI}[LMIs]{Linear Matrix Inequalities}
\acroindefinite{LMI}{an}{a}

% \acro{LQR}{Linear Quadratic Regulator}
% \acroplural{LMI}[\acp{LMI}]{Linear Matrix Inequalities}
% \acroindefinite{LQR}{an}{a}

\acro{LP}{Linear Program}
\acroindefinite{LP}{an}{a}

\acro{OCP}{Optimal Control Problem}
\acroindefinite{OCP}{an}{a}

\acro{ODE}{Ordinary Differential Equation}
\acroindefinite{ODE}{an}{a}

% \acro{POP}{Polynomial Optimization Problem}

\acro{PSD}{Positive Semidefinite}

% \acro{PD}{Positive Definite}

\acro{SDP}{Semidefinite Program}
\acroindefinite{SDP}{an}{a}
% \acro{SIR}{Susceptible, Infected, Removed}

\acro{SOS}{Sum of Squares}
\acroindefinite{SOS}{an}{a}

% \acro{WSOS}{Weighted Sum of Squares}

\end{acronym}

\subsection{Notation}
\label{sec:notation}
% fields, sets, polynomials, measures, indicator functions.

% \urg{Write this}
% The $n$-dimensional euclidean space is denoted as $\Rn^n$, and the set of natural numbers

The set of real numbers is $\Rn$ and of natural numbers is $\Nn$. The set of polynomials with real coefficients in indeterminates $x$ is $\Rn[x]$. Every polynomial $g \in \Rn[x]$ may be uniquely expressed as $g = \sum_{\alpha \in \Nn^n} g_\alpha x^\alpha$ in multi-index notation $x^\alpha = x_1^{\alpha_1} x_2^{\alpha_2}\ldots $ for some finite number of nonzero coefficients $g_\alpha$. The degree of a monomial $x^\alpha$ is $\abs{\alpha} = \sum_i \alpha_i$, and the degree of a polynomial $g$ is the maximum such $\abs{\alpha}$ where $g_\alpha \neq 0$. 

A nonnegative Borel measure supported in a set $X$ is a function that assigns each element of the $\sigma$-algebra of sets over $X$ with a nonnegative number (the `size' or `measure' of the set). The measure $\mu$ follows the rules $\mu(\varnothing) = 0$ and $\mu(A \cup B) = \mu(A) + \mu(B)$ if $A \cap B = \varnothing$ \cite{tao2011introduction}. The support of a nonnegative Borel measure is the locus of points $x$ where every open neighborhood $N(x)$ has $\mu(N(x)) > 0$. The set of all nonnegative Borel measures supported in $X$ is $\Mp{X}$. The space of continuous functions is $C(X)$, and a pairing between a nonnegative measure $\mu \in \Mp{X}$ and a function $f \in C(X)$ may be defined by Lebesgue integration $\inp{f}{\mu} = \int f d \mu = \int_X f(x) d \mu(x)$. The set $C^1(X) \subset C(X)$ is the set of continuous functions with continuous first derivatives. 
The indicator function $I_A$ of a set $A \subseteq X$ takes on the value $I_A(x) = 1$ if $x \in A$ and $I_A(x)=0$ otherwise, and satisfies the rule $\inp{I_A}{\mu} = \mu(A)$ for all $A \subseteq X$.
The mass of a measure $\mu$ is $\mu(X) = \inp{1}{\mu}$, and $\mu$ is a probability measure if $\inp{1}{\mu} = 1$. 
The Dirac delta $\delta_{x = x'} \in \Mp{X}$ is a probability measure supported only at the point $x=x'$, which follows the pairing rule $\inp{f}{\delta_{x=x'}} = f(x')$ for all test functions $f \in C(X)$. A nonnegative Borel measure supported at $r$ distinct points (atoms) is termed a rank-$r$ atomic measure. Such an atomic measure may be formed by a conic combination of Dirac deltas.
The projection  $\pi^x: X \times Y \rightarrow X$ is the map $(x,y) \mapsto x$. Given a mapping $Q: X \rightarrow Y$ and a measure $\mu(x) \in \Mp{X}$, the pushforward $Q_\# \mu(y)$ is the unique measure satisfying $\forall f \in C(Y): \inp{f(Q(x))}{\mu(x)} = \inp{f(y)}{Q_\# \mu(y)}.$

\subsection{Hybrid Systems}
\label{sec:hybrid}

The hybrid systems in this paper are posed over a set of $L$ locations. Each location $\ell = 1..L$ has state variables $x_\ell$ contained in the space $X^\ell \subseteq \Rn^{n_\ell}$. The subsystems obey nominal locally Lipschitz dynamics $f_\ell$ that satisfy
\begin{align}
    \label{eq:dynamics_nominal}\dot{x}_\ell(t) &= f_\ell(t, x_\ell(t)) & & \forall \ell = 1..L.
\end{align}

Available transitions between subsystems may be represented by a directed multigraph. A multigraph is a graph where pairs of vertices may be connected by multiple distinct edges \cite{hartsfield2013pearls}. Let $\gs = (\vs, \es)$ be a multigraph where each of the $L$ vertices of $\vs$ corresponds to a location. 
% Each edge $e \in \es \subset \vs \times \vs$ with  is a directed arc representing a transition between location $\ell$ and location $\ell'$. Self loops with $\ell = \ell'$ are permitted in this class of multigraphs. Edges $e \in \es$ are labeled with a guard $S^e$ and a reset map $R^e$.
Each edge $e \in \es \subset \vs \times \vs$ 
is a directed arc from a source $\textrm{src}(e)$ to a destination $\textrm{dst}(e)$. Self-loops with $\textrm{src}(e) = \textrm{dst}(e)$ are permitted in this class of multigraphs.
Edges $e$ are associated with a guard $S_e$ and a reset map $R_e$. The guard $S_e$ is a subset of $X_{\textrm{src}(e)}$, and the reset map $R_e: X_{\textrm{src}(e)} \rightarrow X_{\textrm{dst}(e)}$ effects the transition.
% \urg{The topology of the underlying space may be described by quotient mapping. Define the space $\Lambda_e =\left\{ R^e(x_{\textrm{src}(e)}) = x_\textrm{dst}(e), \ x_{\textrm{src}(e)} \in S^e, \ x_\textrm{dst}(e) \in X_{\textrm{dst}(e)} \right\}$ as the overlaps of transition $e$.
%  This space is,
% \begin{equation}
%     \bigsqcup_{\ell=1}^L X_\ell / \prod_{e \in \es} \Lambda_e
% \end{equation}
% Refer to \cite{burden2015metrization} for more details, may not be necessary.}
The hybrid system is fully encoded by the tuple $\hs = (X, f, \gs, S, R)$ with attributes:
\begin{align}
    X &= \{X_\ell\}_{\ell=1}^L & & \textrm{State Spaces}\nonumber \\
    f &=\{f_\ell\}_{\ell=1}^L& &  \textrm{Dynamics}\nonumber \\
    \gs &= (\vs, \es)& & \textrm{Transition Multigraph}\nonumber \\
    S &= \{S_e\}_{e \in \es}& & \textrm{Guard Surfaces}\nonumber \\
    R &= \{R_e\}_{e \in \es} & & \textrm{Reset Maps}\nonumber
\end{align}
Execution of a hybrid system with multigraph transitions is based on Algorithm 1 of \cite{shia2014reachable}. An additional input is a set of Zeno caps $\{N_e\}_{e \in \es}$ which halt trajectory execution if any transition $e$ is traversed at least $N_e$ times \cite{ames2006there}.
The output of the following Algorithm \ref{alg:hybrid_exec} is a system trajectory $x(t)$, as well as records $\mathcal{T}, \mathcal{C}$ containing information about the times and locations of state transitions respectively. 
\begin{algorithm}
 \caption{\label{alg:hybrid_exec}Execution of Hybrid System $\hs$}
        \begin{algorithmic}
  \Input
  \Desc{$x_0$}{Initial Point}
  \Desc{$\ell_0$}{Initial Location}
  \Desc{$\hs$}{Hybrid System}
  \Desc{$T$}{Maximal Time}
  \Desc{$N$}{Transition Caps}
  \EndInput
  \Output
  \Desc{$x(t)$}{Trajectory of System}
  \Desc{$\mathcal{T}$}{Time Breaks}
  \Desc{$\mathcal{C}$}{Location Breaks}
  \Desc{$\mathcal{N}$}{Transition Counts}
  \EndOutput
\State Initialize Trajectory    $t \leftarrow 0, \ \ell \leftarrow \ell_0, \ x(0) \leftarrow x_0$
\State Initialize Traces $\mathcal{T} \leftarrow \{0\}, \mathcal{C} \leftarrow \{\ell\}, \mathcal{N} \leftarrow \{0\}_{e \in \es}$
\Loop
\State Follow dynamics $x'(s) = f_\ell(t, x(s))$ until $x(t)$ reaches a guard or $t=T$.
\If{$t = T$ \textbf{OR} $\not\exists S_e: \ x(t) \in S_e$ and $\textrm{src}(e) = \ell$, \textbf{OR}  $\exists e: \ \mathcal{N}_e = N_e$} 
        \State \textbf{halt}
            \EndIf
\State Find a guard $S_e$ with $x(t) \in S_e$ and $\textrm{src}(e) = \ell$ 
\State Append $t$ to $\mathcal{T}$ and $\textrm{dst}(e)$ to $\mathcal{C}$
\State Increment $\mathcal{N}_e \leftarrow \mathcal{N}_e + 1$ 
\State  Transition to $\ell \leftarrow \textrm{dst}(e), \  x(t) \leftarrow R_e(x(t))$
\EndLoop
\end{algorithmic}
\end{algorithm}
        
        % \urg{Should add a Tikz drawing of a multigraph with labeled transitions. Two nodes (1, 2) with arcs (1,1), (1,1), (1,2), (2,1)}
The trajectory $x(t)$ is well-defined when the time horizon $T$ and Zeno caps $N_e$ for all $e \in \es$ are finite and the guard surfaces $S_e$ are codimension-1. The trajectory $x(t)$ induces a function $\textrm{Loc}: [0, T] \rightarrow 1..L$ which returns the residing location of $x(t)$ at time $t$. Execution requires the following assumption of transversality,
\begin{assumption}
Let $x_\ell(t)$ be a segment of this trajectory that emerged from a transition $(\ell', \ell)$ at time $t^-$. For all guards $S^e$ with $\textrm{src}(e) = \ell$ such that $x_\ell(t) \in S^e$, the dynamics vector $f(t, x_\ell(t))$ possesses a normal component with respect to the tangent space of $S^e$ at $x_\ell(t)$. This implies that the time elapsed between any two resets is bounded below by some $\delta > 0$. 
\end{assumption}

\subsection{Peak Estimation and Occupation Measures}
\label{sec:peak}

The \ac{ODE} (non-hybrid) peak estimation setting involves a trajectory $x(t \mid x_0)$, starting at the initial point $x_0 \in X_0 \subset X$, and evolving according to dynamics $\dot{x}(t) = f(t, x(t))$ in a space $X$. The program to find the maximum value of a state function $p(x)$ along trajectories is
\begin{equation}
    \begin{aligned}
    P^* = & \sup_{t,\, x_0 \in X_0} p(x(t \mid x_0)), &\label{eq:peak_traj} & & 
    & \dot{x}(t) = f(t, x(t)).
    \end{aligned}
\end{equation}

% \urg{I'm trying to skip fixed terminal time to go straight to free terminal time. This may be a mistake. This section is now in the uncertain paper, may swap with uncertain to split into `occupation measures' and `peak estimation' }

The extremum $P^*$ may be bounded through the use of occupation measure relaxations \cite{cho2002linear}. 
An optimal trajectory satisfying $P^* = p(x^*) = p(x(t^* \mid x_0^*))$ is described by a triple $( x_0^*, t^*, x^*)$ \cite{miller2020recovery}.
The initial probability measure $\mu_0 \in \Mp{X_0}$ is distributed over the set of initial conditions $x_0 \sim \mu_0$. The peak measure $\mu_p \in \Mp{[0, T] \times X}$ is a terminal measure with free terminal time. 
For a stopping time $t^*$ and subsets $A \subseteq [0, t^*], \ B \subseteq X$, the occupation measure $\mu \in \Mp{[0, T] \times X}$ has a definition \cite{cho2002linear}
\begin{equation*}
    \mu(A \times B) =  \int_{X_0}\int_{t=0}^{t^*} I((t, x(t \mid x_0)) \in A \times B) dt \  d\mu_0(x_0).
\end{equation*}

% \urg{Ensure there is no self-plagarism here}
The Lie derivative operator $\Lie_f$ may be defined for all test functions $v \in C^1([0, T]\times X)$
\begin{equation}
    \Lie_f v(t, x) = \partial_t v(t,x) + f(t,x) \cdot \nabla_x v(t,x).
\end{equation}
The three measures ($\mu_0, \ \mu_p, \ \mu$) are linked by Liouville's equation for all test functions $v$ 
\begin{align}
\inp{v(t, x)}{\mu_p} &= \inp{v(0, x)}{\mu_0} + \inp{\Lie_f v(t,x)}{\mu} \label{eq:liou_int}\\
\mu_p &= \delta_0 \otimes \mu_0 + \Lie_f^\dagger \mu \label{eq:liou_mom}
\end{align}

Liouville's equation ensures that initial conditions distributed as $\mu_0$ are connected to terminal points distributed as $\mu_p$ by trajectories following dynamics $f$.
% Two consequences of \eqref{eq:liou_mom} are that $\inp{1}{\mu_0} = \inp{1}{\mu_p}$ ($v(t,x) = 1$) and that $\inp{1}{\mu} = \inp{t}{\mu_p}$ ($v(t, x) = t$).
A convex measure relaxation of problem \eqref{eq:peak_traj} is \cite{cho2002linear},
\begin{subequations}
\label{eq:peak_meas}
\begin{align}
p^* = & \ \sup \quad \inp{p(x)}{\mu_p} \label{eq:peak_meas_obj} \\
    & \mu_p = \delta_0 \otimes\mu_0 + \Lie_f^\dagger \mu \label{eq:peak_meas_flow}\\
    & \inp{1}{\mu_0} = 1 \label{eq:peak_meas_prob}\\
    & \mu, \mu_p \in \Mp{[0, T] \times X} \label{eq:peak_meas_peak}\\
    & \mu_0 \in \Mp{X_0}. \label{eq:peak_meas_init}
\end{align}
\end{subequations}
Constraint \eqref{eq:peak_meas_prob} ensures that both $\mu_0$ and $\mu_p$ are probability measures with unit mass. The objective \eqref{eq:peak_meas_obj} is the expectation of $p(x)$ with respect to $\mu_p$. 
There will be no relaxation gap between problems \eqref{eq:peak_traj} and  \eqref{eq:peak_meas_prob} ($P^* = p^*$) when $[0, T] \times X$ is compact and $f$ is Lipschitz \cite{lewis1980relaxation, cho2002linear, fantuzzi2020bounding}. Program \eqref{eq:peak_meas_init} is a particular form of the optimal control program from \cite{lewis1980relaxation} with zero running cost and  free terminal time.

% Program \eqref{eq:peak_meas} has a dual problem  over continuous functions, 
% \begin{subequations}
% \label{eq:peak_cont}
% \begin{align}
%     d^* = & \ \inf_{\gamma \in \Rn} \quad \gamma & \\
%     & {\gamma} \geq {v(0, x)}  &  \forall x \in X_0 \label{eq:peak_cont_init}\\
%     & \Lie_f v(t, x) \leq 0 & \forall (t, x) \in [0, T] \times X \label{eq:peak_cont_f}\\
%     & v(t, x) \geq p(x) & \forall (t, x) \in [0, T] \times X \label{eq:peak_cont_p} \\
%     &v \in C^1([0, T]\times X) \label{eq:peak_cont_v}&
% \end{align}
% \end{subequations}

% The variable $v(t,x)$ is termed an auxiliary function in \cite{fantuzzi2020bounding}, and is an upper bound on the cost function $p(x)$ by \eqref{eq:peak_cont_p}. The graph $(t, x(t \mid x_0))$ is contained in the sublevel set $\{(t, x) \mid v(t, x) \leq \gamma\}$ for all $x_0 \in X_0$. Programs \eqref{eq:peak_meas} and \eqref{eq:peak_cont} satisfy strong duality ($p^* = d^*$) when the set $[0, T] \times X$ is compact \cite{lewis1980relaxation, cho2002linear, fantuzzi2020bounding}. 
% % The bound $p^* \geq P^*$, is tight to $P^*$ for most locally Lipschitz dynamics $f$ \cite{fantuzzi2020bounding}.

% \urg{Possibly strike the review of peak estimation \acp{LP}. Instead, refer to literature. This would save space.}

% \urg{Something about \ac{LMI} approximation maybe?}

\section{Peak estimation of hybrid systems}
\label{sec:hybrid_peak}

\subsection {Peak Program}
Let $X_0 = \{X_{0 \ell}\}$ be the set of initial conditions for system trajectories. Each of these system trajectories lie inside the set $X = \{X_\ell\}$.

% Let $X_{0 \ell} \subset X_\ell$ be the set of initial conditions for system trajectories at each location.
Each location $\ell$ has a state cost $p_\ell: X_\ell \rightarrow \Rn$ and a set of initial conditions $X_{0 \ell} \subset X_\ell$. Each $p_\ell$ is either bounded below or constant at $-\infty$, and at least one $p_\ell$ is bounded.
The goal of peak estimation is to find the trajectory $x(t)$ which maximizes the state cost across all trajectories and locations: 
% Defining the peak function $p(x(t)): [0, T] \rightarrow \Rn$ taking values $p_\ell(x(t))$ when $x(t)$ is in location $\ell$
    \begin{align}
    P^* = & \sup_{t, \;  \ell_0 \; x_0 } \max_\ell p_\ell(x(t \mid x_0)) \qquad  x(t) \in X_\ell \nonumber\\
    & \textrm{Dynamics follow Algorithm \ref{alg:hybrid_exec} with input } (\ell_0, x_0, \hs, T) \nonumber \\
    & x_{0 } \in X_{0 \ell_0}.     \label{eq:peak_hy_traj}
    \end{align}
The optimization variables of \eqref{eq:peak_hy_traj} are the peak time $t$, initial location $\ell_0$, and initial state $x_{0} \in X_{\ell_0}$. The inner maximization runs over all location-objective functions $p_\ell$.

The following assumptions will be posed on problem \eqref{eq:peak_hy_traj}:
\begin{assumption}
The sets $[0,T], \  X_\ell, X_{\ell 0}$ are compact for all $\ell=1..L$.
\end{assumption}
\begin{assumption}
Each function $p_\ell$ is continuous inside $X_\ell$.
\end{assumption}
\begin{assumption}
Each dynamics function $f_\ell(t, x_\ell)$ is Lipschitz over the compact set $[0, T] \times X_\ell$.
\end{assumption}
\begin{assumption}
Trajectories stay in $X_\ell$ for all $t \in [0, T]$ when starting inside $X_{0 \ell}$.
% If a trajectory at location $\ell$ reaches the boundary $x(t \mid x_0) \in \partial X_{\ell}$ and there does not exist a guard $S_e$ with $x(t \mid x_0) \in S_e$ and $\text{src}(e)=\ell$, then $x(t \mid x_0)$ remains outside $\cup_\ell X_\ell$ for all $t' \in (t, T]$. 
\end{assumption}

\subsection{Measures for Hybrid Systems}

The control and reachability set programs in \cite{shia2014reachable,mohan2016convex,  han2019controller} define measures $\rho_e$ supported over the guard $\Mp{S_e}$ for each transition $e \in \es$. For subsets $A \subset [0, T], \ C_e \subset S_e$ and an initial condition $x_0$, the counting measure $\rho_e$ records the number of times the trajectory, starting from location $\textrm{src}(e)$, enters the patch $C_e$ of the guard $S_e$ with
\begin{equation}
\label{eq:meas_count}
    \rho_e(A \times C_e) = \int_{A} \textrm{card}\left(\lim_{t'\rightarrow t^-} x(t' \mid x_0) \in C_e\right) dt.
\end{equation}
The mass of the counting measure $\rho_e$ is the expected number of times a trajectory will traverse the transition with arc $e$. In a Zeno execution of transition $e$, the mass $\inp{1}{\rho_e}$ will be unbounded, and constraints such as $\inp{1}{\rho_e} \leq N_e$ may be imposed to cap the maximum number of transitions on arc $e$.
% The definition in \eqref{eq:meas_count} counts transitioning trajectories starting in the source location $\textrm{src}(e)$, and is therefore different than equation (20) of \cite{shia2014reachable} which counts in the destination.
Let $X_{0\ell} \subseteq X_\ell$ be a set of initial conditions defined on each space $X_\ell$ in $X$. A distribution of initial conditions over each location is $\mu_{0\ell} \in \Mp{X_{0\ell}}$ for $\ell = 1..L$. Let $T < \infty$ be a final time, and $\mu_{p \ell} \in \Mp{[0, T] \times X_\ell}$ be peak measures supported over each location-space. Trajectories following dynamics $x'(t) = f_\ell(t, x(t))$ in each space $X_\ell$ are tracked by occupation measures $\Mp{[0, T] \times X_\ell}$. Counting measures $\rho_e \in \Mp{S_e}$ are set up over all guards to handle state transitions.
The Liouville equation with guard measures holding for all test functions $v_\ell \in C^1([0, T] \times X_\ell)$ and locations $\ell = 1..L$ is
\begin{align}
        \label{eq:liou_hy} \mu_{p  \ell} &= \delta_0 \otimes\mu_{0 \ell} + \Lie_{f_\ell}^\dagger \mu_\ell\\
        &+\textstyle \sum_{\textrm{src}(e) = \ell} R_{e\#} \rho_e - \textstyle \sum_{\textrm{dst}(e) = \ell} \rho_e. \nonumber
\end{align}
For a location $\ell$ and edge $e$ with $\textrm{src}(e) = \ell$, the pushforward term $R_{e \#}$ in \eqref{eq:liou_hy} should be understood as
\begin{equation}
    \inp{v_{\ell}}{R_{e\#} \rho_e} = \inp{v_{\ell}(t, R_e(x_\ell))}{\rho_e}.
\end{equation}
The mass of the peak measure $\mu_{p\ell}$ is equal to the mass of the initial measure $\mu_{0\ell}$ plus the net flux due to state transitions. 

\subsection{Measure Program}
\label{sec:hybrid_meas}
Problem \eqref{eq:peak_hy_traj} may be relaxed through an infinite-dimensional linear program in occupation measures. The measures $\mu_{0\ell}$ are distributions of initial conditions, and $\rho_e$ are transition counting measures, just as in the Liouville equation \eqref{eq:liou_hy}. The peak measures $\mu_{p \ell}$ are final measures with free terminal time between $t \in [0, T]$. 
% The slack variables $c_e$ ensure that there is no Zeno execution, or at least that transitions occur a finite number of times. 
The measure program in terms of $(\mu_0, \mu_p, \mu, \rho)$ for hybrid peak estimation  (where $\forall \ell$ and $\forall e$ may be expanded to $\forall \ell = 1..L$ and $\forall e \in \es$) is
\begin{subequations}
\label{eq:peak_meas_hy}
\begin{align}
p^* = & \ \textrm{sup} \quad \textstyle\sum_{\ell=1}^L \inp{p_\ell}{\mu_{p\ell}} & \label{eq:peak_meas_hy_obj} \\
    & \mu_{p  \ell} = \delta_0 \otimes\mu_{0 \ell} + \Lie_{f_\ell}^\dagger \mu_\ell  \label{eq:peak_meas_hy_flow} & & \forall \ell\\
    & \quad \ \, +\textstyle \sum_{\textrm{dst}(e) = \ell} R_{e\#} \rho_e  - \textstyle \sum_{\textrm{src}(e) = \ell}\rho_e  \nonumber\\
    & \textstyle\sum_{\ell=1}^L \inp{1}{\mu_{0 \ell}} = 1 & \label{eq:peak_meas_hy_prob}\\
    &  \inp{1}{\rho_e} \leq N_e & & \forall e \label{eq:peak_meas_hy_Zeno}\\
    & \mu_{\ell}, \ \mu_{p \ell} \in \Mp{[0, T] \times X_\ell} & & \forall \ell\label{eq:peak_meas_hy_peak_occ}\\
    & \mu_{0\ell} \in \Mp{X_{0 \ell}}& &\forall \ell \label{eq:peak_meas_hy_init} \\
    & \rho_{e} \in \Mp{S_e} & & \forall e. \label{eq:peak_meas_hy_guard} 
\end{align}
\end{subequations}
\begin{theorem}
Solutions to \eqref{eq:peak_meas_hy} and \eqref{eq:peak_hy_traj} satisfy $p^* \geq P^*$
\end{theorem}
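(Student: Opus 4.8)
### Proof proposal

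The plan is to show that the measure program \eqref{eq:peak_meas_hy} is a genuine relaxation of \eqref{eq:peak_hy_traj} by exhibiting, for every admissible hybrid trajectory, a feasible tuple $(\mu_0,\mu_p,\mu,\rho)$ whose objective value equals the state cost attained by that trajectory. Taking the supremum over all such trajectories then yields $p^* \geq P^*$. The construction mimics the classical occupation-measure argument behind \eqref{eq:peak_meas}, with bookkeeping added for the location structure and the transition counting measures $\rho_e$.

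First I would fix an arbitrary feasible point of \eqref{eq:peak_hy_traj}: an initial location $\ell_0$, an initial state $x_0 \in X_{0\ell_0}$, and a stopping time $t^\star \in [0,T]$, together with the hybrid trajectory $x(t)$ produced by Algorithm \ref{alg:hybrid_exec} on input $(\ell_0,x_0,\hs,T)$, the associated location function $\textrm{Loc}$, the transition times in $\mathcal{T}$, and the transition counts in $\mathcal{N}$. Let $\ell^\star$ be the location of $x(t^\star)$. I would then define the candidate measures as singular (Dirac/atomic) objects supported on this single trajectory: set $\mu_{0\ell} = \delta_{x=x_0}$ if $\ell = \ell_0$ and $\mu_{0\ell}=0$ otherwise; set $\mu_{p\ell} = \delta_{(t,x)=(t^\star,x(t^\star))}$ if $\ell=\ell^\star$ and $0$ otherwise; for each location $\ell$ let $\mu_\ell$ be the occupation measure of the trajectory restricted to the time subintervals during which $\textrm{Loc}(t)=\ell$, i.e. $\inp{w}{\mu_\ell} = \int_{\{t\le t^\star:\ \textrm{Loc}(t)=\ell\}} w(t,x(t))\,dt$ for $w \in C([0,T]\times X_\ell)$; and for each edge $e$ let $\rho_e = \sum_{k} \delta_{(t,x)=(t_{e,k},\,x(t_{e,k}^-))}$, summing over the (finitely many, by the Zeno caps and Assumption on transversality) times $t_{e,k}\le t^\star$ at which the trajectory traverses arc $e$, using the left limit of $x$ at the guard.

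Next I would verify feasibility. Constraints \eqref{eq:peak_meas_hy_peak_occ}–\eqref{eq:peak_meas_hy_guard} hold because each candidate is a nonnegative measure supported where required — $x_0 \in X_{0\ell_0}$, the trajectory stays in $X_\ell$ while in location $\ell$ (Assumption 9), the guard points lie in $S_e$. The normalization \eqref{eq:peak_meas_hy_prob} holds since $\sum_\ell \inp{1}{\mu_{0\ell}} = \mu_{0\ell_0}(X_{0\ell_0}) = 1$. The Zeno bound \eqref{eq:peak_meas_hy_Zeno} holds because $\inp{1}{\rho_e}$ equals the number of traversals of $e$, which Algorithm \ref{alg:hybrid_exec} caps at $N_e$. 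The crux is the Liouville equation \eqref{eq:peak_meas_hy_flow}: I would test it against an arbitrary $v_\ell \in C^1([0,T]\times X_\ell)$ and reduce it to the fundamental theorem of calculus applied piecewise. On each maximal time interval $[a,b]$ where $\textrm{Loc}(t)=\ell$, $\frac{d}{dt}v_\ell(t,x(t)) = \Lie_{f_\ell}v_\ell(t,x(t))$, so integrating gives $v_\ell(b,x(b)) - v_\ell(a,x(a)) = \inp{\Lie_{f_\ell}v_\ell}{\mu_\ell|_{[a,b]}}$. Summing these telescoping increments over all sojourn intervals in location $\ell$: the endpoint $v_\ell(a^+,\cdot)$ of an interval that begins with a reset along edge $e$ (so $\textrm{dst}(e)=\ell$) equals $v_\ell(t, R_e(x(t^-)))$, which is exactly $\inp{v_\ell}{R_{e\#}\rho_e}$ contribution; the endpoint $v_\ell(b^-,\cdot)$ of an interval that ends at a guard on edge $e$ (so $\textrm{src}(e)=\ell$) contributes $-\inp{v_\ell}{\rho_e}$; the very first interval contributes $-v_\ell(0,x_0) = -\inp{v_\ell(0,x)}{\mu_{0\ell}}$ when $\ell=\ell_0$; and the last interval contributes $+v_\ell(t^\star,x(t^\star)) = \inp{v_\ell}{\mu_{p\ell}}$ when $\ell=\ell^\star$. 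Rearranging reproduces \eqref{eq:peak_meas_hy_flow} tested against $v_\ell$; since $v_\ell$ was arbitrary, the measure identity holds. Finally, the objective value is $\sum_\ell \inp{p_\ell}{\mu_{p\ell}} = p_{\ell^\star}(x(t^\star))$, and by choosing, within the fixed trajectory, the time $t^\star$ and location that realize $\max_\ell p_\ell(x(t\mid x_0))$, this equals the value of \eqref{eq:peak_hy_traj} at the chosen feasible point. Hence $p^\star$, being a supremum over a feasible set containing all these points, satisfies $p^\star \ge P^\star$.

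The main obstacle is the careful telescoping in the Liouville verification: one must correctly match each reset event to a $+R_{e\#}\rho_e$ term in the destination location and a $-\rho_e$ term in the source location, handle self-loops (where both terms land in the same location), and deal cleanly with the left-limit values $x(t^-)$ at guards versus the post-reset values $R_e(x(t^-))$ — this is where transversality (so only finitely many transitions occur and $x$ has well-defined one-sided limits) and the codimension-1 guard assumption are used. Everything else is routine measure-theoretic bookkeeping, and no compactness or Lipschitz hypotheses are needed for this direction (they are only needed for the reverse inequality $p^\star \le P^\star$, which is not claimed here).
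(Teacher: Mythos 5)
Your proposal is correct and follows essentially the same route as the paper: construct, from any admissible trajectory $(\ell_0, x_0, t^\star)$, Dirac initial and peak measures, per-location occupation measures restricted to sojourn times, and atomic guard measures at the pre-reset transition points, then check feasibility of \eqref{eq:peak_meas_hy} and conclude by taking the supremum over trajectories. The only difference is that you spell out the telescoping verification of the Liouville constraint \eqref{eq:peak_meas_hy_flow}, which the paper leaves implicit, and your sign/location bookkeeping (pushforward into the destination, subtraction at the source) matches the program's convention.
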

\begin{proof}
Let $(x(t \mid x_0, \ell_0), \mathcal{T},\mathcal{C})$ be a trajectory from the execution of Algorithm \ref{alg:hybrid_exec} that stops at time $t^* \in [0, T]$, and $\textrm{Loc}(t)$ be the function returning the residing location of $x(t)$ at time $t$. This trajectory may be described by a tuple $(\ell_0, x_0, t^*)$. 
% The trajectory $x(t)$ starts at the point $x_0^*$ in location $\ell_0^*$, and reaches the peak value of $P^* = p_{\ell^*}(x_p^*) = p_{\ell^*}(x(t^* \mid x_0^*, \ell_0))$ in time $t^*$ at the point $x_p^*$ in location $\ell_p^*$. 
Measures $\forall \ell: \mu_{0\ell}, \mu_{p \ell}, \mu_{\ell}$ and $\forall e: \rho_e$ that are feasible solutions to constraints \eqref{eq:peak_meas_hy_flow}-\eqref{eq:peak_meas_hy_guard} may be formed from the trajectory $x(t)$. The initial measure $\mu_{0\ell}$ is $\delta_{x = x_0}$ for $\ell=\ell_0$ and is the zero measure for $\ell \neq \ell_0$. The peak measure $\mu_{p \ell}$ is $\delta_{t=t^*} \otimes \delta_{x = x(t^* \mid x_0, \ell_0)}$ for $\ell = \textrm{Loc}(t^*)$ and is also the zero measure for all other $\ell$. 
% The objective from \eqref{eq:peak_meas_obj} is $\sum_{\ell=1}^L\inp{p_\ell}{\mu_{p\ell}} = \inp{p_{\ell_p^*}}{\delta_{t=t_p^*} \otimes \delta_{x = x_p^*}} + \sum_{\ell\neq \ell_p^*}0 = p_\ell(x(t \mid x_0^*)) = P^*$.
Let $T_\ell$ be the set $T_\ell = \{t \mid t \in [0, t_p^*], \textrm{Loc}(t) = \ell\}$ of times where $x(t)$ is in location $\ell$.
Each relaxed occupation measure $\mu_\ell$ may respectively be set to the occupation measure of $t \mapsto (t, x(t \mid x_0, \ell_0))$ in the times $t \in T_\ell$.
% the unique measure satisfying $\inp{v_\ell(t, x_\ell)}{\mu_\ell}=  \int_{T_\ell} v(t, x(t \mid x_0)) dt$ for each $\ell = 1..L$. 
If the transition with edge $e \in \es$ is traversed $\mathcal{N}_e$ times along the trajectory $x(t)$ at points $\{(t_i^e, x_{i}^e)\}_{i = 1}^{\mathcal{N}_e}$ for $x_{i}^e \in X_{\textrm{src}(e)}$, the guard measure $\rho_e$ may be defined as $\rho_e = \sum_{i=1}^{\mathcal{N}_e} \delta_{t=t_i^e} \otimes \delta_{x=x_{i}^e}.$ 
The objective $p^*$ is an upper bound on $P^*$ because a set of measures $(\mu_{0\ell}, \mu_{p \ell}, \mu_{\ell}, \rho_e)$ constructed from every trajectory $x(t)$ satisfy the constraints of \eqref{eq:peak_meas_hy} with objective $P^*$. 
% If multiple trajectories $x^1(t), x^2(t), \ldots$ each attain a peak value of $P^*$ along their execution, then convex combinations of their derived measures will form feasible solutions of program \eqref{eq:peak_meas_hy} by convexity. 
\end{proof}
% If $P^*$ is reached multiple times along a trajectory $x(t)$, then $x(t)$ should be split into subsidiary trajector

% \urg{Show that for every trajectory solving \eqref{eq:peak_hy_traj} there exists measures that are feasible solutions to constraints \eqref{eq:peak_meas_hy_flow}-\eqref{eq:peak_meas_hy_guard}.

% Guard measures $\rho_e$ are supported on the poincare map, points of transition.
% }

% \begin{remark}
% The measures $\mu_{0\ell}, \mu_{p \ell}, \rho_e$ are constructed from trajectories are each atomic measures or are the zero measures. The occupation measures $\mu_\ell$ are generically non-atomic (or are zero), given that $\mu_\ell$ are supported on the graph of trajectories in their locations.
% \end{remark}

% Constraint \eqref{eq:peak_meas_hy_prob} ensures that the initial measures $\{\mu_{0 \ell}\}_\ell$ combine to form a probability measure over the disjoint union of initial conditions $\sqcup_\ell X_{0 \ell}$. The sum of constraint \eqref{eq:peak_meas_hy_flow} with test function $v_\ell = 1$ along all $\ell$ is,
% \begin{equation}
%     \textstyle \sum_{\ell = 1}^L \inp{1}{\mu_{p  \ell}} =  \textstyle \sum_{\ell = 1}^L \inp{1}{\mu_{0  \ell}} = 1.
% \end{equation}

\begin{remark}
\label{rmk:peak_below}
Setting a peak objective to $p_\ell(x) = -\infty$ is equivalent to constraining $\mu_{p \ell}$ to the zero measure, because trajectories to maximize $p(x)$ will not terminate in location $\ell$.  Likewise, a measure $\mu_{0\ell} \in \Mp{X_{0\ell}}$ where $X_{0\ell}=\varnothing$ is the zero measure.
\end{remark}

\begin{theorem}
\label{thm:bounded_mass} All measures involved in a solution to \eqref{eq:peak_meas_hy} are bounded.
\end{theorem}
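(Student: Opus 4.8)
The plan is to bound the total mass of each of the four families of measures $\mu_{0\ell}$, $\rho_e$, $\mu_{p\ell}$, $\mu_\ell$, in the order in which the constraints of \eqref{eq:peak_meas_hy} pin them down, using only the normalization \eqref{eq:peak_meas_hy_prob}, the Zeno caps \eqref{eq:peak_meas_hy_Zeno}, and two elementary test functions in the Liouville equation \eqref{eq:peak_meas_hy_flow}.

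First, the initial measures are controlled immediately: they are nonnegative by \eqref{eq:peak_meas_hy_init}, and \eqref{eq:peak_meas_hy_prob} forces $\sum_{\ell}\inp{1}{\mu_{0\ell}} = 1$, hence $\inp{1}{\mu_{0\ell}} \le 1$ for every $\ell$. The guard measures are nonnegative by \eqref{eq:peak_meas_hy_guard} and satisfy $\inp{1}{\rho_e} \le N_e < \infty$ for all $e$ by \eqref{eq:peak_meas_hy_Zeno}; finiteness of the caps $N_e$ is precisely what rules out the infinite-mass Zeno behaviour noted after \eqref{eq:meas_count}.

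Next I substitute the constant test function $v_\ell \equiv 1$ into \eqref{eq:peak_meas_hy_flow}. Since $\Lie_{f_\ell} 1 = 0$ and a pushforward preserves mass, $\inp{1}{R_{e\#}\rho_e} = \inp{1}{\rho_e}$, this collapses to $\inp{1}{\mu_{p\ell}} = \inp{1}{\mu_{0\ell}} + \sum_{\textrm{dst}(e)=\ell}\inp{1}{\rho_e} - \sum_{\textrm{src}(e)=\ell}\inp{1}{\rho_e}$. Discarding the nonnegative subtracted term and applying the bounds already obtained gives $\inp{1}{\mu_{p\ell}} \le 1 + \sum_{e \in \es} N_e < \infty$, so every peak measure has finite mass. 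To bound the occupation measures I then substitute $v_\ell(t,x) = t$, which is admissible because $t \in C^1([0,T]\times X_\ell)$; here $\Lie_{f_\ell} t = 1$, the term against $\mu_{0\ell}$ vanishes since $v_\ell(0,x)=0$, and the pushforward leaves the time coordinate untouched so $\inp{t}{R_{e\#}\rho_e} = \inp{t}{\rho_e}$. Rearranging the resulting identity yields $\inp{1}{\mu_\ell} = \inp{t}{\mu_{p\ell}} - \sum_{\textrm{dst}(e)=\ell}\inp{t}{\rho_e} + \sum_{\textrm{src}(e)=\ell}\inp{t}{\rho_e}$. Because $0 \le t \le T$ on the compact set $[0,T]\times X_\ell$, each pairing against $t$ is at most $T$ times the corresponding total mass, so $\inp{1}{\mu_\ell} \le T\inp{1}{\mu_{p\ell}} + T\sum_{e\in\es} N_e < \infty$, while $\inp{1}{\mu_\ell}\ge 0$ by nonnegativity.

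The argument is essentially mechanical, so no single step is a genuine obstacle; the only place requiring care is the bookkeeping in \eqref{eq:peak_meas_hy_flow} — verifying that $R_{e\#}$ acts only on the spatial variable, so that it preserves both the total mass and the value of the pairing against $t$, and that the constant function and $t$ are legitimate $C^1$ test functions on each $[0,T]\times X_\ell$. The conceptual content is simply that a finite horizon $T$ together with finite Zeno caps $N_e$ supplies exactly the ``budget'' needed to keep every measure finite; dropping the caps would allow the guard and occupation measures to carry unbounded mass.
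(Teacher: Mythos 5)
Your proof is correct and follows essentially the same route as the paper's: masses of $\mu_{0\ell}$ and $\rho_e$ come directly from the normalization and Zeno-cap constraints, the test function $v_\ell \equiv 1$ in the Liouville equation bounds the peak-measure masses, and $v_\ell = t$ (using $\Lie_{f_\ell} t = 1$ and $t \le T$ on the compact support) bounds the occupation-measure masses. Your explicit bookkeeping of the pushforward and the quantitative bounds $\inp{1}{\mu_{p\ell}} \le 1 + \sum_e N_e$ and $\inp{1}{\mu_\ell} \le T(1 + 2\sum_e N_e)$ only make the paper's argument slightly more explicit.
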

\begin{proof}
Sufficient conditions for a measure to be bounded are that its mass is finite and its support is compact. This setting satisfies the compact support requirement. 

Given that all measures $(\mu_0, \mu_p, \mu, \rho)$ are nonnegative, their masses will also be nonnegative numbers. The mass of the transition measures $\rho$ are upper bounded by the Zeno constraints \eqref{eq:peak_meas_hy_Zeno} under the assumption that all $N_e$ are finite. Constraint \eqref{eq:peak_meas_hy_init} upper bounds each mass $\inp{1}{\mu_{0 \ell}}$. For each location $\ell$, choosing a test function $v_\ell(t, x_\ell) = 1$ for Liouville equation \eqref{eq:peak_meas_hy_flow} yields
\begin{align}
\label{eq:mass_liou_1}
    \inp{1}{\mu_{p\ell}} &= \inp{1}{\mu_{0\ell}} + \textstyle\sum_{\textrm{dst}(e) = \ell} \inp{1}{\rho_e } - \textstyle \sum_{\textrm{src}(e) = \ell}\inp{1}{\rho_e}.\\
\intertext{Every term on the right-hand side of \eqref{eq:mass_liou_1} is finite and $\inp{1}{\mu_{p\ell}} \geq 0$ by measure nonnegativity, so each peak measure $\mu_{p\ell}$ has bounded mass. Utilizing a test function of $v_\ell(t,x_\ell) = t$ with $\Lie_{f_\ell}t = 1$ results in}
\label{eq:mass_liou_t}
    \inp{t}{\mu_{p\ell}} &= \inp{1}{\mu_{\ell}} + \textstyle\sum_{\textrm{dst}(e) = \ell} \inp{t}{\rho_e } - \textstyle \sum_{\textrm{src}(e) = \ell}\inp{t}{\rho_e}.
\end{align} 

The terms $\inp{t}{\mu_{p\ell}}, \inp{t}{\rho_e} $ are all finite due to bounded masses and compact support, so the occupation measures $\mu_{\ell}$ also have finite mass and are bounded.
\end{proof}

\begin{theorem}
\label{thm:no_relaxation}
The objectives in \eqref{eq:peak_hy_traj} and \eqref{eq:peak_meas_hy} will satisfy $p^* = P^*$ when $[0, T] \times \prod_\ell X_\ell$ is compact, each $f_\ell$ is Lipschitz, and $p^*$ is bounded above. 
\end{theorem}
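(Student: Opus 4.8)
The plan is to establish the reverse inequality $p^* \le P^*$; combined with the relaxation bound $P^* \le p^*$ established above (each execution of Algorithm~\ref{alg:hybrid_exec} induces a feasible tuple of measures, and the supremum of the resulting objectives is $P^*$), this gives $p^* = P^*$. So I would fix an arbitrary tuple $(\mu_{0\ell}, \mu_{p\ell}, \mu_\ell, \rho_e)$ feasible for \eqref{eq:peak_meas_hy}; by Theorem~\ref{thm:bounded_mass} all of these measures are finite with compact support. The first step is a normalization: summing the mass identity \eqref{eq:mass_liou_1} over all locations $\ell$, the guard terms cancel pairwise — each $\rho_e$ contributes $+\inp{1}{\rho_e}$ at $\ell = \textrm{dst}(e)$ and $-\inp{1}{\rho_e}$ at $\ell = \textrm{src}(e)$ — so that $\sum_\ell \inp{1}{\mu_{p\ell}} = \sum_\ell \inp{1}{\mu_{0\ell}} = 1$ by \eqref{eq:peak_meas_hy_prob}.

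The core of the argument is a representation (superposition) principle: I claim each peak measure $\mu_{p\ell}$ is supported on the set $\mathcal{R}_\ell \subseteq [0,T]\times X_\ell$ of pairs $(t,x)$ that are genuinely reachable in location $\ell$ by some execution of Algorithm~\ref{alg:hybrid_exec} from an admissible initial condition. I would prove this by adapting the classical superposition principle for Liouville's equation \cite{lewis1980relaxation, cho2002linear, fantuzzi2020bounding} to the hybrid setting, by induction on the number of traversed transitions — finite because each $N_e < \infty$. At the base step one disintegrates the initial data $\delta_0 \otimes \mu_{0\ell}$ and, within each location separately, applies the single-location superposition principle (valid since $f_\ell$ is Lipschitz and $[0,T]\times X_\ell$ compact) to write the portion of $\mu_\ell$ traversed before any transition as an average of occupation measures of honest arcs of $\dot x = f_\ell(t,x)$; the part of this flow that reaches a guard $S_e$ accounts for the mass of $\rho_e$, which is transported by $R_{e\#}$ to become initial data for location $\textrm{dst}(e)$ at the entry time. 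Iterating at most $\sum_e N_e$ times exhausts all transported mass (by nonnegativity and the telescoping identity above), and the mass never routed through a guard at each stage is exactly what $\mu_{p\ell}$ must carry, whence $\supp{\mu_{p\ell}} \subseteq \mathcal{R}_\ell$. The transversality assumption is what makes the gluing of consecutive arcs at the guards well posed, and compactness supplies the tightness needed to pass to limits in the construction.

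Granting the claim, the remainder is routine. On $\mathcal{R}_\ell$ we have $p_\ell(x) \le P^*$ directly from the definition of $P^*$ as a supremum over all executions and locations (for a location with $p_\ell \equiv -\infty$, Remark~\ref{rmk:peak_below} forces $\mu_{p\ell} = 0$ and that term vanishes). Since $\mu_{p\ell}\ge 0$, integrating yields $\inp{p_\ell}{\mu_{p\ell}} \le P^*\inp{1}{\mu_{p\ell}}$; summing over $\ell$ and using the normalization above gives $\sum_\ell \inp{p_\ell}{\mu_{p\ell}} \le P^* \sum_\ell \inp{1}{\mu_{p\ell}} = P^*$. Taking the supremum over feasible tuples yields $p^* \le P^*$ — the hypothesis that $p^*$ is bounded above serves only to ensure this value is a genuine real number a priori — and hence $p^* = P^*$.

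I expect the superposition principle to be the sole real obstacle. The ODE and single-location versions are standard, but the hybrid version must simultaneously track the free terminal time encoded in $\mu_{p\ell}$, couple the outgoing flux across each guard $S_e$ to the incoming reset pushforward $R_{e\#}\rho_e$ at the next location, and terminate the induction via the Zeno caps; one must also verify the decomposition is consistent with Algorithm~\ref{alg:hybrid_exec}, i.e.\ that the constructed arcs transition precisely when they first reach a guard, for which the transversality assumption and the codimension-one property of the $S_e$ are essential. A fallback I would keep in reserve is a duality argument: for each $\epsilon > 0$, build a $C^1$ auxiliary-function tuple feasible for the dual of \eqref{eq:peak_meas_hy} with value at most $P^* + \epsilon$ by smoothing the locationwise value function of \eqref{eq:peak_hy_traj}, but its technical core is essentially the same.
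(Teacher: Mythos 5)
Your reduction of the theorem to a hybrid superposition principle has the right skeleton, and the surrounding steps are fine: the mass normalization $\sum_\ell \inp{1}{\mu_{p\ell}}=1$, the pointwise bound $\inp{p_\ell}{\mu_{p\ell}}\le P^*\inp{1}{\mu_{p\ell}}$ on the reachable set, and the use of Theorem 1 for $P^*\le p^*$. But be aware that the paper never proves the representation result itself: its proof consists of adapting Theorem 17 of \cite{zhao2019optimal} (together with \cite{zhao2019switching} and Remark 2.1 of \cite{lasserre2008nonlinear}), and the only work done there is to check that free terminal time, location-indexed initial sets, and finite Zeno caps in place of a no-Zeno assumption do not break that cited theorem. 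Your proposal takes on the hard part directly, and as sketched it has genuine gaps exactly there, so what you have is a plan whose key lemma is of essentially the same strength as the theorem.

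Concretely: (i) the ``portion of $\mu_\ell$ traversed before any transition'' is not an object furnished by \eqref{eq:peak_meas_hy_flow}; the Liouville constraint is one balance equation per location with no labeling of mass by transition count, so the inductive peeling must be constructed (e.g.\ by unrolling the automaton into transition-counted copies and deriving a Liouville system for the pieces), not merely invoked. (ii) Nothing in the LP ties $\rho_e$ to the first hitting of $S_e$ by the reconstructed arcs: $\rho_e=0$ with flow passing straight through the guard is feasible, as is jumping with only a fraction of the arriving mass, since guards may lie in the interior of $X_{\textrm{src}(e)}$ (as in the two-mode example). A superposition argument therefore yields support of $\mu_{p\ell}$ on points reachable by relaxed executions in which jumps at guards are optional, and your appeal to transversality cannot upgrade this to executions of Algorithm \ref{alg:hybrid_exec}, where the jump is mandatory: Assumption 1 constrains genuine trajectories of the vector field, not arbitrary feasible measure tuples. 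Reconciling the measure semantics with the execution semantics is precisely the content the paper inherits from \cite{zhao2019optimal}. (iii) Feasible tuples may carry guard mass not traceable to $\mu_{0\ell}$ --- for instance instantaneous jump cycles, where two guard measures are exchanged by the reset pushforwards with zero dwell time, satisfy \eqref{eq:peak_meas_hy_flow} and the Zeno caps --- so your telescoping ``exhausts all transported mass'' step must be replaced by an argument that such residual circulation cannot feed $\mu_{p\ell}$ (here time-advance along flow helps, but the degenerate jump-only case must be ruled out or shown harmless). Until (i)--(iii) are resolved, the claim $\mathrm{supp}\,\mu_{p\ell}\subseteq\mathcal{R}_\ell$ is an assertion rather than a proof; the paper's proof is short exactly because it delegates this machinery to the cited results rather than rebuilding it.
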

\begin{proof}
This statement may be proved by extending arguments from  \cite{zhao2019optimal}. Theorem 17 of \cite{zhao2019optimal} states there is no relaxation gap in measure \acp{LP} of an optimal control program with appropriate assumptions, extending the \ac{ODE} result of \cite{lewis1980relaxation}. Free final time is already accounted for in \cite{zhao2019optimal} by reference to Remark 2.1 of \cite{lasserre2008nonlinear}. The \ac{ODE} problem in \cite{lewis1980relaxation} can handle initial conditions lying in a set $X_0$, so the method in \cite{zhao2019optimal} can similarly work with sets of initial conditions $\{X_{0\ell}\}_{\ell=1}^L$ as demonstrated by \cite{zhao2019switching}. The work in \cite{zhao2019switching} has `switching' costs (possibly differing running and terminal costs in each location), which is realized by the costs $p_\ell$. The final modification between this work and \cite{zhao2019optimal} is that problem \eqref{eq:peak_hy_traj} has finite Zeno caps $N_e$, while Assumption 3 of \cite{zhao2019optimal} forbids Zeno trajectories. The allowance for free terminal time permits consequence 4 of Theorem 12 of \cite{zhao2019optimal} to read that there exists a constant $C$ such that $\sum_e \inp{1}{\rho_e} \leq \sum_e N_e = C$. The three modifications of \cite{zhao2019optimal} (free terminal time, multiple initial conditions, Zeno caps) are all cleared, so $p^* = P^*$ under the compactness and Lipschitz assumptions.
% \urg{No relaxation gap between $P^*$ and $p^*$? Need to find optimal control reference for Hybrid system, extension of Lewis and Vinter work in optimal control \cite{lewis1980relaxation}.

% This is accomplished by \cite{zhao2019optimal}.}
\end{proof}

% The peak measure $\sqcup_\ell \mu_{p \ell}$ is also a probability measure, this time distributed over points in $[0, T] \times \prod_\ell X_\ell$ \urg{(disjoint union vs cartesian product?)}.

\subsection{Function Program}
\label{sec:hybrid_cont}

The measure program \eqref{eq:peak_meas_hy} is dual to an infinite-dimensional linear program in continuous functions. The Lagrangian $\scL$ of problem \eqref{eq:peak_meas_hy} with dual variables $v_\ell \in C^1([0, T] \times X_\ell), \ \gamma \in \Rn, \alpha \in \Rn_+^{\abs{\es}}$ is
\begin{align}
    \label{eq:peak_lagrangian}
    \scL &= \textstyle\sum_{\ell=1}^L \inp{p_\ell}{\mu_{p \ell}} + \inp{v_\ell(t,x)}{\delta_0 \otimes\mu_{0 \ell} + \Lie_{f_\ell}^\dagger \mu_\ell}  \\
    &+\inp{v_\ell(t, x)}{\textstyle \sum_{\textrm{dst}(e) = \ell} R_{e\#} \rho_e - \textstyle \sum_{\textrm{src}(e) = \ell} \rho_e - \mu_{p \ell}} \nonumber\\
    &+ \gamma(1 - \textstyle\sum_{\ell=1}^L\inp{1}{\mu_{0 \ell}}) + \sum_{e \in \es}\alpha_e (N_e  -\inp{1}{\rho_e}). \nonumber 
\end{align}

% Lagrangian \eqref{eq:peak_lagrangian} may be reorganized into,
%     \begin{align}
%     \scL &= \gamma + \textstyle\sum_{e \in \es} N_e \alpha_e + \textstyle\sum_{\ell=1}^L \inp{v_\ell(0, x) - \gamma}{\mu_{0 \ell}}\label{eq:peak_lagrangian_r} \\
%     &+ \textstyle\sum_{\ell=1}^L \inp{p_\ell(x_\ell)- v_\ell(t, x)}{\mu_{p \ell}} + \textstyle\sum_{\ell=1}^L \inp{\Lie_{f_\ell}v_\ell(t, x)}{\mu_\ell}     \nonumber
% \\
%     &+ \inp{-\alpha_e + v_{\textrm{dst}(e)}(t, R_e(x_{\textrm{src}(e)})) - v_{\textrm{src}(e)}(t, x_{\textrm{src}(e)})}{\rho_e} \nonumber 
%     \end{align}

The dual function program of \eqref{eq:peak_meas_hy} is
\begin{subequations}
\label{eq:peak_cont_hy}
\begin{align}
    d^* = & \inf_{\gamma, \alpha, v} \quad \sup_{\mu_{0\ell}, \mu_{p \ell}, \mu_{\ell}, \rho_e} \scL \nonumber \\
  d^* = &\ \inf_{\gamma \in \Rn, \ \alpha \in \Rn_+^{\abs{\es}}} \quad \gamma + \textstyle\sum_{e \in \es} N_e \alpha_e & & \\
    & \forall \ell: \ \forall x_\ell \in  X_{0\ell}: \nonumber\\
    & \qquad \gamma \geq v_\ell(0, x_\ell)  & \label{eq:peak_cont_hy_init}\\
    & \forall \ell: \ \forall (t, x_\ell)\in [0, T] \times X_\ell: \nonumber\\
    & \qquad 0 \geq \Lie_{f_\ell} v_\ell(t, x_\ell)& &  \label{eq:peak_cont_hy_flow}\\
    & \forall e: \ \forall (t,x_{\textrm{src}(e)}) \in [0, T] \times X_{\textrm{src}(e)}: \nonumber\\
    & \qquad v_{\textrm{src}(e)}(t, x_{\textrm{src}(e)}) - v_{\textrm{dst}(e)}(t, R_e(x_{\textrm{src}(e)}))\geq -\alpha_e    \label{eq:peak_cont_hy_jump}\\
    & \forall \ell: \ \forall (t, x_\ell) \in [0, T] \times X_\ell: \nonumber\\
    & \qquad v_\ell(t, x_\ell) \geq p_\ell(x_\ell) & \label{eq:peak_cont_hy_p} \\
    & \forall \ell: \ v_\ell(t,x_\ell) \in C^1([0, T]\times X_\ell) \label{eq:peak_cont_hy_v}. \qquad  
    \end{align}
\end{subequations}

The dual variables $v_\ell$ are auxiliary functions that decrease along trajectories \eqref{eq:peak_cont_hy_flow} and along transitions \eqref{eq:peak_cont_hy_jump}. The auxiliary functions upper bound the location-costs by \eqref{eq:peak_cont_hy_p}. 
% The values $\alpha$ are dual to the Zeno slacks $z$, and positive $\alpha$ make it easier for the jump condition \eqref{eq:peak_cont_hy_jump} to hold. 
% If transition $e$ is traveled at most $N_e-1$ times (no risk of Zeno), then by complementary slackness of constraint \eqref{eq:peak_meas_hy_guard} $\alpha_e = 0$.
The dual variable $\alpha_e$ will be zero if transition $e$ is traveled at most $N_e-1$ times (complementary slackness of  \eqref{eq:peak_meas_hy_guard}).

\begin{theorem}
\label{thm:strong_duality}
Programs \eqref{eq:peak_meas_hy} and \eqref{eq:peak_cont_hy} will possess equal objectives $p^*=d^*$ when each $X_\ell$ is compact and $(T, N_e)$ are each finite.
\end{theorem}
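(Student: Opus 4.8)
The plan is to read \eqref{eq:peak_meas_hy} and \eqref{eq:peak_cont_hy} as a primal--dual pair of infinite-dimensional linear programs over the dual pairings $\big(\mathcal{M}([0,T]\times X_\ell),\,C([0,T]\times X_\ell)\big)$ and $\big(\mathcal{M}(S_e),\,C(S_e)\big)$: the Liouville equalities \eqref{eq:peak_meas_hy_flow} are the constraints tested against the $C^1$ multipliers $v_\ell$, the normalization \eqref{eq:peak_meas_hy_prob} is tested against $\gamma\in\Rn$, and the Zeno inequalities \eqref{eq:peak_meas_hy_Zeno} against $\alpha\in\Rn_+^{\abs{\es}}$. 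Weak duality $p^*\le d^*$ is immediate from the Lagrangian \eqref{eq:peak_lagrangian}: for any primal-feasible $(\mu_0,\mu_p,\mu,\rho)$ the constraint terms vanish and $\scL$ equals the primal objective $\sum_\ell\inp{p_\ell}{\mu_{p\ell}}$, while for any dual-feasible $(\gamma,\alpha,v)$ each inequality \eqref{eq:peak_cont_hy_init}--\eqref{eq:peak_cont_hy_p} forces $\scL\le\gamma+\sum_e N_e\alpha_e$ on all nonnegative measures (every bracket pairs a nonnegative measure against a nonnegative integrand); chaining the two bounds gives $p^*\le d^*$.

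For the absence of a gap I would invoke a standard strong-duality theorem for infinite-dimensional linear programs (of the type used for the \ac{ODE} peak program in \cite{cho2002linear, fantuzzi2020bounding}), whose one substantive hypothesis here is that the primal feasible set is nonempty and weak-$*$ compact. Nonemptiness is the trajectory construction already used to prove $p^*\ge P^*$: choose a nonempty $X_{0\ell_0}$ and a point $x_0$, run Algorithm~\ref{alg:hybrid_exec} (well defined since $T$ and all $N_e$ are finite), and assemble the atomic and occupation measures along that execution. For compactness, Theorem~\ref{thm:bounded_mass} does the work: $X_\ell$, $S_e$ and $[0,T]$ being compact makes every feasible measure compactly supported, and Theorem~\ref{thm:bounded_mass} bounds all masses $\inp{1}{\mu_{0\ell}},\inp{1}{\rho_e},\inp{1}{\mu_{p\ell}},\inp{1}{\mu_\ell}$ by fixed finite constants; hence the feasible set lies in a total-variation ball, which is weak-$*$ compact (and metrizable) by Banach--Alaoglu. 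One then checks the feasible set is weak-$*$ closed: each constraint is preserved under weak-$*$ limits because the functions against which we integrate --- $v_\ell$, $v_\ell(0,\cdot)$, $\Lie_{f_\ell}v_\ell$, $v_\ell(t,R_e(\cdot))$, and the constants --- are continuous on the relevant compact set, and a one-sided inequality against a continuous integrand is closed. Since the objective $\sum_\ell\inp{p_\ell}{\mu_{p\ell}}$ is weak-$*$ continuous ($p_\ell\in C(X_\ell)$), the supremum is attained, and the cited theorem yields $p^*=d^*$.

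I expect the main obstacle to be the abstract plumbing rather than the compactness bookkeeping: one must fix topologies in which the constraint operator sending $(\mu_0,\mu,\rho)$ to its Liouville, mass, and Zeno residuals is continuous with range correctly paired with $(v,\gamma,\alpha)$, so that \eqref{eq:peak_cont_hy} is genuinely the linear-programming dual of \eqref{eq:peak_meas_hy} and not a strictly smaller program, and then verify the closedness/value-function condition that rules out a gap --- concretely, that the perturbed value $z\mapsto\sup\{\sum_\ell\inp{p_\ell}{\mu_{p\ell}} : \text{constraints hold with right-hand sides shifted by }z\}$ is upper semicontinuous at $0$, which again follows because the mass bounds of Theorem~\ref{thm:bounded_mass} hold uniformly for small $z$ and so the perturbed feasible sets stay inside a common weak-$*$ compact ball; being concave and u.s.c. at $0$, this value function has a supergradient there, and that supergradient furnishes a dual optimal solution certifying $d^*=p^*$. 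A minor point to keep straight throughout is that, with $f_\ell$ only continuous on the compact set $[0,T]\times X_\ell$, $\Lie_{f_\ell}^\dagger\mu_\ell$ is a priori only a distribution, so \eqref{eq:peak_meas_hy_flow} must be read in the weak ``for all $v_\ell\in C^1$'' sense --- consistent with the quantification over $C^1$ multipliers in \eqref{eq:peak_cont_hy}.
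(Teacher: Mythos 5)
Your proposal is correct and takes essentially the same route as the paper: the paper's proof is a one-line appeal to Theorem 2.6 of \cite{tacchi2022convergence}, whose hypotheses are precisely the ingredients you verify --- bounded masses (Theorem \ref{thm:bounded_mass}) and compact supports yielding weak-$*$ compactness/closedness of the feasible set, together with nonemptiness and the value-function argument ruling out a gap. In effect you have written out in full the standard infinite-dimensional LP duality argument that the paper delegates to that citation.
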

\begin{proof}
$p^*=d^*:$
Strong duality follows by arguments from Theorem 2.6 of \cite{tacchi2022convergence}, specifically from boundedness of measures (Theorem \ref{thm:bounded_mass}) and compactness (Assumption A2).
% Theorem 3.10 of \cite{anderson1987linear} states that necessary and sufficient conditions for strong duality between a measure and function \ac{LP} are that the affine map induced by linear constraints are closed in the weak-* topology and $p^*$ is bounded below. The first requirement is satisfied because the image of constraints \eqref{eq:peak_meas_hy_flow}-\eqref{eq:peak_meas_hy_guard} is closed with respect to inputs $(\mu_{0\ell}, \mu_{p \ell}, \mu_{\ell}, \rho_e).$ Given that each $p_\ell$ is either bounded below or is equal to $-\infty$ (see Remark \ref{rmk:peak_below}), a choice of measures $\mu_{p \ell}$ to maximize the objective in \eqref{eq:peak_meas_hy} will not be supported in locations $\ell$ where $p_\ell = -\infty$. The objective $p^*$ is therefore bounded below, satisfying the conditions for strong duality in Theorem 3.10 of \cite{anderson1987linear}.
\end{proof}

\subsection{Linear Matrix Inequality Program}
% TK
\label{sec:hybrid_lmi}

% \subsubsection{Review of Moment-SOS Hierarchy}
The Moment-\ac{SOS} hierarchy is a method to produce upper bounds to measure \acp{LP} by a sequence of Linear Matrix Inequalities (\acp{LMI}) of increasing size
\cite{lasserre2009moments}. Let $X \in \Rn^n$ be a basic semialgebraic set $X = \{x \mid g_i(x) \geq 0, i=1..N_c\}$, which is the locus of a finite number of finite-degree polynomial inequality constraints. 
An $\alpha$-moment of a measure $\mu \in \Mp{X}$ for $\alpha \in \Nn^{n}, \ \beta \in \Nn$ is $\mathbf{y_{a}} = \inp{x^\alpha}{ \mu}$. To each moment sequence $\mathbf{y}$, there is an associated Riesz linear functional $\mathbb{L}_{\mathbf{y}}$ acting as $\mathbb{L}_{\mathbf{y}}[ \sum_{\alpha, \beta} c_{\alpha  \beta} x^\alpha]\rightarrow  \sum_{\alpha} c_{\alpha}\mathbf{y}_{\alpha}$.

Assume that each polynomial $g_i(x) = \sum_\gamma g_{i\gamma} x^\gamma$ in the definition of $X$ has a finite degree $d_i$.
If the set $X$ satisfies an Archimedean condition (all compact sets may be made Archimedean by adding a redundant ball constraint) \cite{putinar1993compact}, then necessary and sufficient conditions for the sequence $\mathbf{y}$ of putative moments (pseudo-moments) up to degree $2d$ to be moments of a measure $\mu \in \Mp{X}$ are that the following matrices indexed by monomials $\alpha, \beta \in \Nn^n$ are \ac{PSD}:
\begin{equation}
\label{eq:moment_loc}
    \M_d(\mathbf{y})_{\alpha \beta} = \mathbf{y}_{\alpha + \beta}, \quad \M_{d-d_i}(g_{i} \mathbf{y})_{\alpha \beta}  =  \textstyle\sum_{\gamma} g_{i \gamma} \mathbf{y}_{\alpha + \beta + \gamma}.
\end{equation}
The measure $\mu$ is referred to as the \textit{representing measure} of the pseudo-moments $\mathbf{y}$.
The symbol $\M_d(X \mathbf{y})$ will denote a block-diagonal matrix formed by the matrices in \eqref{eq:moment_loc}.

% \subsubsection{LMI for Hybrid Peak Estimation}
The basic semialgebraic sets containing measures in \eqref{eq:peak_meas_hy} are
\begin{align}
    \label{eq:peak_sets}
    \forall \ell: & & X_\ell &= \{x_\ell \mid g_{\ell i}(x_\ell )\geq 0 \mid \ i = 1..N_c^{\ell}\} \nonumber\\
    \forall \ell:  & & X_{0 \ell} &= \{x_\ell \mid  g_{0 \ell i}(x_\ell )\geq 0 \mid \ i = 1..N_c^{0\ell}\}\\
    \forall e:  & & S_e &= \{x_{\textrm{src}(e)} \mid g_{e i}(x_{\textrm{src}(e)})\geq 0 \mid \ i = 1..N_c^{e}\}. \nonumber
\end{align}

Polynomials $g_{\ell i}(x), \ g_{0 \ell i}(x_\ell), \  g_{e i}(x_{\textrm{src}(e)})$ have finite degrees $d_{\ell i}, \ d_{0\ell i}, \  d_{e i}$ respectively for each  $i, \ell, e$ as appropriate.
% Let$(\mathbf{y}^{0 \ell}, \mathbf{y}^{p \ell}, \mathbf{y}^{\ell})$ be the pseudo-moments describing $(\mu_{0 \ell}, \mu_{p \ell}, \mu_p)$ up to degree $2d$ ($\mathbf{y}^{\ell}$ may require moments of degree $>2d$ if the polynomial $f_\ell(t, x_\ell)$ has degree $>1$). For each edge $e \in \es$ let the sequence $\mathbf{r}^e$ describe moments of the transition measure $\rho_e$. 
Let $(\mathbf{y}^{0 \ell}, \mathbf{y}^{p \ell}, \mathbf{y}^{\ell}, \mathbf{r}^e)$ be pseudo-moments of the measures $(\mu_{0 \ell}, \mu_{p \ell}, \mu_p, \rho_e)$.
The Liouville equation \eqref{eq:peak_meas_hy_flow} may be expressed as a collection of affine constraints in the pseudo-moments. Substituting the test function $v(t, x_\ell) = x_\ell^\alpha t^\beta$ into \eqref{eq:peak_meas_hy_flow} yields a relation for each $\alpha \in \Nn^{n_\ell}, \ \beta \in \Nn, \ \ell \in 1..L$:
\begin{align}
    \label{eq:liou_lmi_hy}
        0 &= -\inp{x_\ell^\alpha t^\beta}{\mu_{p \ell}} + \inp{x_\ell^\alpha t^\beta}{ \delta_{t=0}\otimes \mu_{0 \ell}} + \inp{\Lie_{f_\ell}x_\ell^\alpha t^\beta}{\mu_\ell} \nonumber\\
        &+\textstyle \sum_{\textrm{dst}(e) = \ell} \inp{R_e(x_\ell)^\alpha t^\beta}{\rho_e} - \textstyle \sum_{\textrm{src}(e) = \ell}\inp{x_\ell^\alpha t^\beta}{\rho_e}.
\end{align}
The expression $\textrm{Liou}^\ell_{\alpha \beta}(\mathbf{y}^{0 \ell}, \mathbf{y}^{p \ell}, \mathbf{y}^{\ell}, \mathbf{r}^{\es_\ell} ) = 0$ may be defined to abbreviate the affine constraint in pseudo-moments induced by \eqref{eq:liou_lmi_hy}, where $\es_\ell = \{e \in \es \mid \textrm{src}(e) = \ell \textrm{ or } \textrm{dst}(e) = \ell\}$ is the set of arcs including location $\ell$. For a constant degree $d \in \Nn$, define the quantities $d_\ell' = d + \ceil{\textrm{deg}f_\ell/2}-1$ and $k_e = \deg{R_e}$.
The degree-$d$ \ac{LMI} relaxation of \eqref{eq:peak_meas_hy} with variables $(\mathbf{y}^{0 \ell}, \mathbf{y}^{p \ell}, \mathbf{y}^{\ell}, \mathbf{r}^{e} )$ is

\begin{subequations}
\label{eq:peak_lmi_hy}
\begin{align}
    p^*_d = & \textrm{max} \quad \textstyle \sum_{\ell} \textstyle\sum_{\alpha} p_{\ell \alpha} \mathbf{y}_{\alpha}^{p \ell} \label{eq:peak_lmi_hy_obj} \\
    & \textstyle \sum_\ell \mathbf{y}^{0\ell}_0 = 1 \\
     \forall \ell: \ & \alpha \in \Nn^{n_\ell}, \beta \in \Nn, \ \abs{\alpha} + \abs{\beta} \leq 2d \nonumber\\
    & \qquad  \textrm{Liou}_{\alpha \beta}^\ell(\mathbf{y}^{0 \ell}, \mathbf{y}^{p \ell}, \mathbf{y}^{\ell}, \mathbf{r}^{\es_\ell} ) = 0 \textrm{ by \eqref{eq:liou_lmi_hy}} \label{eq:peak_lmi_hy_flow}\\
    \forall e: \ & \mathbf{y}^{e}_0 \leq N_e \label{eq:peak_lmi_hy_Zeno}  \\
    \forall \ell: \ &  \M_d(X^{0\ell}\mathbf{y}^{0 \ell}), \ \M_d([0,T]\times X^{\ell}\mathbf{y}^{p \ell}), \  \M_{d'_\ell}([0,T] \times X^\ell\mathbf{y}^{\ell})
     \succeq 0  \label{eq:peak_lmi_hy_psd} \\
    \forall e: \ & \M_{k_e d_e}(S_e\mathbf{r}^{e})
     \succeq 0. \qquad  \label{eq:peak_lmi_hy_psd_guard} 
\end{align}
\end{subequations}

The affine constraints \eqref{eq:peak_lmi_hy_flow}-\eqref{eq:peak_lmi_hy_Zeno} implement a truncation of constraints \eqref{eq:peak_meas_hy_Zeno}-\eqref{eq:peak_meas_hy_Zeno} in terms of finite-length pseudo-moments. Constraints \eqref{eq:peak_lmi_hy_psd}-\eqref{eq:peak_lmi_hy_psd_guard} ensure that there exist representing measures for the pseudo-moments. Solutions to the \ac{SDP} generated from the \ac{LMI} \eqref{eq:peak_lmi_hy} by  raising the degree $d$ will form a chain of upper bounds  $p^*_d \geq p^*_{d+1} \geq \ldots \geq p^*$.

\begin{theorem}
\label{thm:lmi_convergence}
The sequence of upper bounds will satisfy $\lim_{d \rightarrow \infty} p_d^* = P^*$ when $\forall \ell=1..L:$ $[0, T] \times  X_\ell$ and $X_{0 \ell}$ are Archimedean, $f_\ell(t,x)$ are
 polynomial, and $\forall e: N_e$ are  finite.
\end{theorem}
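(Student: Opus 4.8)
The plan is to chain together three ingredients: (i) the no-relaxation-gap result of Theorem \ref{thm:no_relaxation}, which gives $p^* = P^*$ for the infinite-dimensional measure program \eqref{eq:peak_meas_hy}; (ii) the fact that each finite-degree LMI relaxation \eqref{eq:peak_lmi_hy} is an outer approximation of \eqref{eq:peak_meas_hy}, so that $p^*_d \geq p^*$ for every $d$ and the sequence $(p^*_d)$ is monotone nonincreasing and bounded below by $p^* = P^*$; and (iii) a standard moment-convergence / weak-$*$-compactness argument showing $\lim_{d\to\infty} p^*_d \leq p^*$, which together with (ii) forces equality. Since Theorem \ref{thm:no_relaxation} is already available, the only real content is step (iii).

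For step (iii) I would argue as follows. Fix an optimal solution sequence: for each $d$, let $(\mathbf{y}^{0\ell}, \mathbf{y}^{p\ell}, \mathbf{y}^{\ell}, \mathbf{r}^e)$ be feasible for \eqref{eq:peak_lmi_hy} achieving $p^*_d$. The Archimedean hypothesis on each $[0,T]\times X_\ell$, $X_{0\ell}$, and on each $S_e$ (which is a closed subset of the compact $X_{\mathrm{src}(e)}$, hence can be made Archimedean by adding a redundant ball constraint) yields a uniform bound on the low-order moments: the degree-$0$ PSD/localizing constraints plus the mass normalization $\sum_\ell \mathbf{y}^{0\ell}_0 = 1$, the Zeno caps $\mathbf{y}^e_0 \leq N_e$, and the mass-flow identities \eqref{eq:liou_lmi_hy} with $\alpha=\beta=0$ bound $\inp{1}{\mu_{0\ell}}, \inp{1}{\rho_e}, \inp{1}{\mu_{p\ell}}$, and then the $v_\ell = t$ instance of \eqref{eq:liou_lmi_hy} bounds $\inp{1}{\mu_\ell}$ — this is exactly the pseudo-moment analogue of Theorem \ref{thm:bounded_mass}. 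Combined with the Archimedean (bounded-domain) structure, every entry of every truncated moment sequence is uniformly bounded across $d$. By a diagonal extraction I obtain a subsequence along which all pseudo-moments converge entrywise to limits $(\bar{\mathbf{y}}^{0\ell}, \bar{\mathbf{y}}^{p\ell}, \bar{\mathbf{y}}^{\ell}, \bar{\mathbf{r}}^e)$. Passing to the limit: each affine constraint \eqref{eq:liou_lmi_hy} and the normalization/Zeno constraints survive (they involve only finitely many moments each), and each moment/localizing matrix inequality in \eqref{eq:peak_lmi_hy_psd}–\eqref{eq:peak_lmi_hy_psd_guard} holds at every fixed order. By Putinar's theorem (applicable precisely because of the Archimedean assumption on each semialgebraic domain), the limit sequences are moment sequences of genuine nonnegative measures supported on the respective sets, and these measures satisfy all constraints of \eqref{eq:peak_meas_hy}. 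Hence they are feasible for \eqref{eq:peak_meas_hy}, so $\lim_{d} p^*_d = \lim_d \sum_\ell \sum_\alpha p_{\ell\alpha}\mathbf{y}^{p\ell}_\alpha = \sum_\ell \inp{p_\ell}{\bar\mu_{p\ell}} \leq p^*$. Here I use that $p_\ell$ is polynomial — actually continuous suffices, but polynomiality makes the objective a finite linear functional of the moments, so the limit of objectives equals the objective of the limit without any care. Combined with $p^*_d \geq p^* = P^*$ for all $d$, this gives $\lim_d p^*_d = p^* = P^*$.

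The main obstacle is making the uniform-boundedness-of-pseudo-moments step fully rigorous, i.e. verifying that the Archimedean structure plus the finitely many mass-type constraints genuinely pins down a uniform bound on \emph{all} entries of \emph{all} truncated moment matrices, uniformly in $d$ — in particular handling the pushforward terms $\inp{R_e(x_\ell)^\alpha t^\beta}{\rho_e}$ in \eqref{eq:liou_lmi_hy}, which are linear in $\mathbf{r}^e$ but with coefficients depending on the polynomial $R_e$, and confirming this does not disturb the boundedness argument since $\rho_e$ is supported on the compact $S_e$. This is routine in the moment-SOS literature (it is the content of e.g. the convergence proofs in \cite{lasserre2009moments, tacchi2022convergence}), so I would handle it by citing the standard argument rather than reproving it, and restrict the written proof to indicating how the hybrid-specific constraints — the guard/reset terms and the Zeno caps — fit into that template.
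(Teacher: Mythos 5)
Your proposal is correct, and it reaches the conclusion by a somewhat different mechanism than the paper. The paper treats the convergence of the hierarchy as a black box: it cites general convergence results (\cite{tacchi2022convergence}, Theorem 5 of \cite{trnovska2005strong}, Theorem 4.4 of \cite{lasserre2009moments}) and verifies their hypotheses — Archimedean sets, \emph{existence of an interior (strictly feasible) point} of constraints \eqref{eq:peak_meas_hy_flow}--\eqref{eq:peak_meas_hy_guard}, exhibited explicitly as the degenerate solution $\mu_{0\ell}=\delta_{x=x_0}$, $\mu_{p\ell}=\delta_{t=0}\otimes\delta_{x=x_0}$ with all other measures zero, and boundedness of all masses (re-running the Theorem \ref{thm:bounded_mass} argument with test functions $v_\ell=1$ and $v_\ell=t$) — before invoking Theorem \ref{thm:no_relaxation} for $p^*=P^*$. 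You instead run the primal-side argument directly: monotonicity gives $p_d^*\geq p^*=P^*$, and uniform boundedness of the truncated pseudo-moments (your mass bounds are exactly the paper's Theorem \ref{thm:bounded_mass} argument transplanted to pseudo-moments) plus a diagonal extraction, Putinar representation, and feasibility of the limit measures gives $\lim_d p_d^*\leq p^*$. The trade-off: your route never needs the interior-point/strong-duality condition that the paper's cited theorems require, at the cost of reconstructing the weak-$*$ compactness machinery (which you, reasonably, also delegate to the standard references for the routine uniform-boundedness step); the paper's route is shorter but must check strict feasibility. Two small points to tighten if you write this up: the optimum $p_d^*$ of \eqref{eq:peak_lmi_hy} need not be attained a priori, so either take $\epsilon_d$-optimal feasible points or note that the feasible set is compact once the uniform moment bounds are in hand; and your bounds on $\inp{t}{\mu_{p\ell}}$, $\inp{t}{\rho_e}$ at the pseudo-moment level require the $[0,T]$ (and, for the guards, a redundant ball) constraints to appear explicitly in the localizing descriptions, which is consistent with the paper's convention of adding redundant ball constraints to enforce Archimedeanity.
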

\begin{proof}
The upper bound sequence will converge to $p^*$ when all sets are Archimedean, there exists an interior point to constraints \eqref{eq:peak_meas_hy_flow}-\eqref{eq:peak_meas_hy_guard}, and all measures $(\mu_{0\ell}, \mu_{p \ell}, \mu_{\ell}, \rho_e)$ have bounded moments (\cite{tacchi2022convergence}, Theorem 5 of \cite{trnovska2005strong} and Theorem 4.4 of \cite{lasserre2009moments}).

Let $x_0$ be an initial point starting in some nonempty location $X_\ell$. The set of measures where $\mu_{0\ell} = \delta_{x=x_0}, \ \mu_{p\ell} = \delta_{t=0}\otimes \delta_{x=x_0}$ and all other measures are the zero measure is an interior point to \eqref{eq:peak_meas_hy_flow}-\eqref{eq:peak_meas_hy_guard} (trajectory starting at $x_0$ with zero elapsed time).
Given that each $[0, T] \times X_\ell$ is compact, it is sufficient that all measures have bounded masses in order for the measures to have bounded moments.  The masses of $\rho_e$ are each upper bounded by the finite quantity $N_e$ by constraint \eqref{eq:peak_meas_hy_guard}, and the sum of the masses of $\mu_{0\ell}$ are upper bounded by 1 through \eqref{eq:peak_meas_hy_prob}. The sum of constraint \eqref{eq:peak_meas_hy_flow} with test function $v_\ell = 1$ along all $\ell$ is $\sum_{\ell = 1}^L \inp{1}{\mu_{p  \ell}} =  \textstyle \sum_{\ell = 1}^L \inp{1}{\mu_{0  \ell}} = 1$, so each mass of  $\mu_{p \ell}$ is finite. Lastly, the use of a test function of $v_\ell=t$ on each Liouville equation in \eqref{eq:peak_meas_hy_flow} yields the finite expression $\inp{1}{\mu_\ell} =  \inp{t}{\mu_p} -\sum_{\textrm{dst}(e)=\ell} \inp{t}{\rho_e} + \sum_{\textrm{src}(e)=\ell} \inp{t}{\rho_e}$. The sequence of upper bounds will therefore converge to $p^*$ as $d \rightarrow \infty$ with $p^* = P^*$ from Theorem \ref{thm:no_relaxation}.
\end{proof}

% and this sequence will converge with $\lim_{d\rightarrow\infty} p^*_d =p^*$ if all sets in \eqref{eq:peak_sets} are Archimedean and $T, \forall e: N_e$ are finite
%  \cite{lasserre2009moments}. 
 The sizes of the moment matrices in problem \eqref{eq:peak_lmi_hy} are listed in Table \ref{tab:mom_size}. The computational complexity of numerical \ac{SDP} solvers scale in a polynomial manner with the size of the largest \ac{PSD} matrix \cite{lasserre2008nonlinear}. These \ac{PSD} matrix sizes may be reduced if extant structure such as symmetry, quotient, or sparsity structure is present in  \eqref{eq:peak_meas_hy}.
\begin{table}[h]
\caption{\label{tab:mom_size}Sizes of moment matrices in \ac{LMI} \eqref{eq:peak_lmi_hy}}
        \centering
        \begin{tabular}{c c c c c}
             Moment&  $\M_d(\mathbf{y}^{0\ell})$ & $\M_d(\mathbf{y}^{p\ell})$ & $\M_{d'_\ell}(\mathbf{y}^{\ell})$ & $\M_{d k_e}(\mathbf{r}^{e})$ \\
             Size & $\binom{n_\ell+d}{d}$ & $\binom{1+n_\ell+d}{d}$ & $\binom{1 + n_\ell+d'_\ell}{d'_\ell}$ & $\binom{1+n_\ell + k_e d}{k_e d}$         \end{tabular}
    \end{table}

% \urg{Talk about the computational complexity of \eqref{eq:peak_lmi_hy}. Sizes of moment matrices.}

\begin{remark}
Guards with codimension-1 sets $S_e$ may replace their \ac{PSD} localizing constraints  with linear equality constraints or quotient ring reductions in $\mathbf{r}^e$.
\end{remark}

% Some of the \ac{PSD} localizing constraints in \eqref{eq:peak_lmi_hy_guard} may be replaced with linear equality constraints in entries of $\mathbf{r}^e$ because the minimum codimension of $S_e$ is 1.

% Degree-$d$ \ac{LMI} truncation of problem \eqref{eq:peak_meas_hy} goes here. Requires pseudo-moments, Liouville equation definition as affine constraints, and moment and localizing matrices.

\begin{remark}
Algorithm 1 of \cite{miller2020recovery} may be used to attempt extraction of near-optimal trajectories if the moment matrices $\forall \ell: \ \M_{d}(\mathbf{y}^{0 \ell}), \M_{d}(\mathbf{y}^{p \ell})$ are low-rank.
% if the solution \eqref{eq:peak_lmi_hy} is approximately tight to  \eqref{eq:peak_meas_hy}. The set of candidate optimal trajectories arise from the atoms of $\M_{d}(\mathbf{y}^{0 \ell}), \M_{d}(\mathbf{y}^{\ell})$ for each $\ell$. 
% \urg{Maybe omit this}
\end{remark}

\section{Extensions}

\label{sec:extensions}
This section details extensions to the previously presented peak estimation framework for hybrid systems.
\subsection{Safety}

This section verifies safety of hybrid system trajectories with respect to a group of unsafe sets, based on prior (\ac{ODE}) work in \cite{miller2021distance}. Let $X_{u \ell} = \{x_\ell \mid p_{\ell i}(x_\ell) \geq 0, \ i = 1..N_u\}$ be an unsafe basic semialgebraic set for each location $\ell = 1..L$. 
Letting $c_\ell(x_\ell, y_\ell)$ be a distance function (cost) with a point-unsafe-set distance
\begin{equation}
    c_\ell(x_\ell; X_{u\ell}) = \inf_{y_\ell \in X_{u\ell}} c_\ell(x_\ell, y_\ell),
\end{equation} the distance estimation problem for hybrid systems is
\begin{equation}
    \begin{aligned}
    Q^* =& \inf_{ \ell' \in 1..N_u, t \in [0, T], \ell_0, x_0} c(x(t \mid x_0); X_{u_{\ell'}}) \\
    & \textrm{Dynamics follow Algorithm \ref{alg:hybrid_exec} with input } (\ell_0, x_0, \hs, T) \nonumber \\
    & x_{0 } \in X_{0 \ell}.     \label{eq:dist_hy_traj}
    \end{aligned}
\end{equation}

Following the procedure from \cite{miller2021distance}, a joint-measure $\eta_\ell(x_\ell, y_\ell) \in \Mp{X_\ell \times X_{u \ell}}$ is added for each unsafe set. The distance objective in \eqref{eq:dist_hy_traj} is replaced with an equivalent expectation over the joint probability measure $\inp{c_\ell(x_\ell, y_\ell)}{\eta_\ell}$.

The measure program for distance estimation with variables $(\mu_{p \ell}, \mu_{0 \ell}, \mu_{\ell},\eta_\ell,  \rho_e)$ is
\begin{subequations}
\label{eq:peak_meas_dist}
\begin{align}
q^* = & \ \textrm{inf} \quad \textstyle \sum_{\ell=1}^L \inp{c_\ell(x_\ell, y_\ell)}{\eta_\ell} \label{eq:peak_meas_dist_obj} \\
    & \pi^{x_\ell}_\# \eta_\ell = \pi^{x_\ell}_\# \mu_{p \ell} \ \forall \ell \label{eq:dist_meas_marg_bound}\\
    & \textrm{Constraints \eqref{eq:peak_meas_hy_flow}-\eqref{eq:peak_meas_hy_Zeno}} \\
    & \textrm{Variables from \eqref{eq:peak_meas_hy_peak_occ}-\eqref{eq:peak_meas_hy_guard}}\\
    & \forall \ell: \quad \eta_\ell(x_\ell, y_\ell) \in \Mp{X_\ell \times X_{u \ell}}  \label{eq:dist_meas_eta}.
\end{align}
\end{subequations}

\subsection{Uncertainty}
Peak estimation for hybrid systems may be applied to systems with  uncertainty, extending the \ac{ODE} case in \cite{miller2021uncertain}. 
Let $ W_\ell \subset \Rn^{N_{w\ell}}$ 
be a compact set of  time-dependent disturbances for each location. 
Each location obeys dynamics $\dot{x}_\ell = f(t, x_\ell(t), w_\ell(t)),  \  \forall t, \ell: w_\ell(t) \in W_\ell$, 
in which there is no prior assumption of continuity on the process $w(\cdot)$. The uncertainty act as adversarial optimal controls attempting to raise the  peak functions $(p_\ell)$.

Uncertainty in this manner may be realized by adjusting the Liouville equation in \eqref{eq:peak_meas_hy_flow} and occupation measure definitions in \eqref{eq:peak_meas_hy_peak_occ} (where $w_\ell(t)$ acts as a Young measure/relaxed control \cite{young1942generalized}) as in
\begin{subequations}
\label{eq:uncertainty_mods}
\begin{align}
        & \mu_{p  \ell} = \delta_0 \otimes\mu_{0 \ell} + \pi^{tx}_\# \Lie_{f_\ell}^\dagger \mu_\ell  \label{eq:peak_meas_unc_hy_flow} & & \forall \ell\\
& \quad \ \, +\textstyle \sum_{\textrm{dst}(e) = \ell} R_{e\#} \rho_e  - \textstyle \sum_{\textrm{src}(e) = \ell}\rho_e  \nonumber\\
& \mu_{\ell} \in \Mp{[0, T] \times X_\ell \times W_\ell} & & \forall \ell.\label{eq:peak_meas_unc_hy_peak_occ}
\end{align}
\end{subequations}

A particular form of time-dependent uncertainty is switching/polytopic structure. If the system model is $\dot{x}_\ell = \sum_k^{N_s} w_{k \ell} f_{k \ell}(t, x)$ for $N_s$ switching modes and $w_{k \ell} \geq 0, \sum_k w_{k \ell} = 1$, then the Liouville equation in \eqref{eq:uncertainty_mods} may be expressed for occupation measures $\mu_{k \ell} \in \Mp{[0, T] \times X}$ as
\begin{subequations}
\label{eq:uncertainty_switching}
\begin{align}
        & \mu_{p  \ell} = \delta_0 \otimes\mu_{0 \ell} + \textstyle \sum_k^{N_s} \Lie_{f_{k \ell}}^\dagger \mu_{k \ell}  \label{eq:peak_meas_switch_hy_flow} & & \forall \ell\\
& \quad \ \, +\textstyle \sum_{\textrm{dst}(e) = \ell} R_{e\#} \rho_e  - \textstyle \sum_{\textrm{src}(e) = \ell}\rho_e  \nonumber\\
& \mu_{\ell} \in \Mp{[0, T] \times X_\ell} & & \forall \ell. \label{eq:peak_meas_switch_hy_peak_occ}
\end{align}
\end{subequations}

Time-independent uncertainty restricted to a compact set $\Theta \subset \Rn^{N_\theta}$ may also be added by adjoining to dynamics a new state $\dot{x}_\ell = f(t, x_\ell(t), \theta, w_\ell(t)), \ \dot{\theta} = 0$. This new state $\theta$ is preserved between transition jumps, inducing lifted reset maps $\tilde{R}_{s \rightarrow t}(x_s, \theta) = (R(x_t), \theta)$.

\section{Numerical Examples}
\label{sec:examples}

% All code is written in MATLAB R2021a, and

Experiments are available at \url{https://github.com/Jarmill/hybrid_peak_est}, and were written in MATLAB 2021a. Dependencies include Gloptipoly3 \cite{henrion2003gloptipoly}, Yalmip interface \cite{lofberg2004yalmip}, and Mosek \cite{mosek92}. All experiments were run on an 
2.30 GHz Intel i9 CPU with 64.0 GB of RAM.

\subsection{Two-Mode}

This system is a modification of Example 2 of \cite{prajna2004safety} to ensure improved numerical conditioning. The two locations correspond to modes of `No Control' ($\ell=1$) and `Control' ($\ell=2$) with dynamics
\begin{subequations}
\label{eq:two_mode_deterministic}
\begin{align}
    f_1(t, x) &= [x_2; -x_1+x_3; x_1 + (1+x_3)^2 (2 x_2 + 3 x_3)] \nonumber \\
    f_2(t, x) &= [x_2; -x_1+x_3; -x_1 - (2 x_2 + 3 x_3)].\label{eq:two_box_dynamics}
\end{align}
\end{subequations}

\subsubsection{Two-Mode: Standard}

Trajectories start in the initial set $X_{01} = \{x \mid \norm{x}_2^2 = 0.2^2\}$ ($X_{02} = \varnothing$), and evolve for a time horizon of $T=20$. The transition edges are $\es = \{(1, 2), (2, 1)\}$ with guard surfaces
\begin{align}
    S_{(1,2)} &= \{x \mid x_1^2/4 + x_2^2 + x_3^2 = 1.5^2\}\label{eq:two_box_guards} \\
    S_{(2,1)} &= \{x \mid x_1^2 + x_2^2 + x_3^2 = 0.2^2\}, \nonumber
\end{align}
and each transition has a trivial reset map $R_e(x) = x$. The Zeno caps used in simulation were $N_{(1,2)}=N_{(2,1)}=5$ with total spaces of $X_1 = X_2 = [-1.5, 1.5]^3$. Figure \ref{fig:two_box_state} plots system trajectories in location 1 (left) and 2 (right), starting from the initial set $X_0$ (gray region).
The peak estimation task for this system is to upper bound extreme values of $p_2(x)=x_1^2$ along system trajectories ($p_1(x)=-\infty$). Solving the \ac{SDP} generated from \ac{LMI} \eqref{eq:peak_lmi_hy} at orders 1-5 produces the sequence of upper bounds,
$p^*_{1:5} = [2.250, 0.6514, 0.4643, 0.4076, 0.3958].$
% $p^*_{1:6} = [2.250, 0.7014, 0.4731, 0.4086, 0.3994, 0.3893].$
\begin{figure}[ht]
    \centering
    \includegraphics[width=0.75\linewidth]{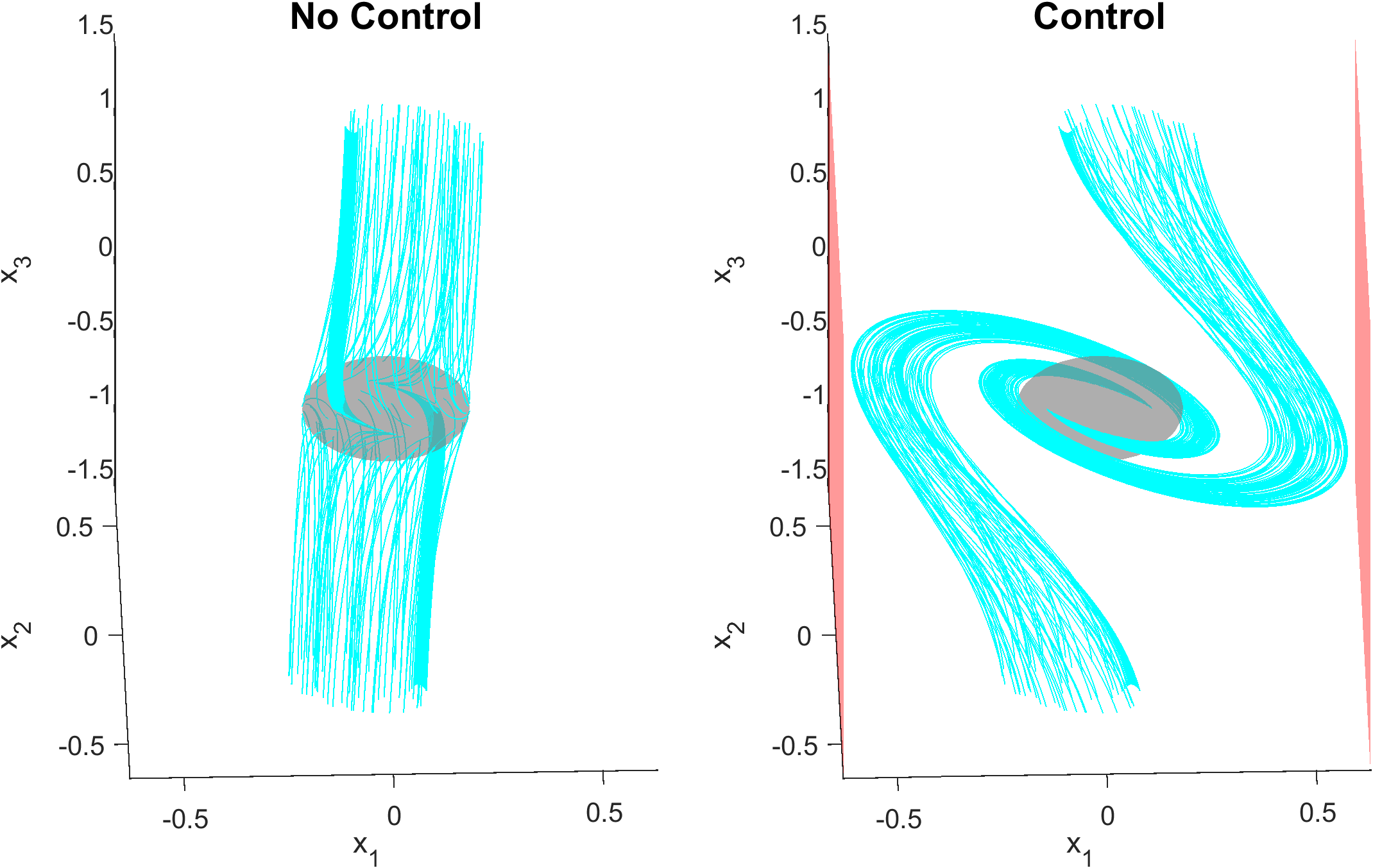}
    \caption{Deterministic Two-Mode Bound of $x_1^2 \leq 0.3958 = p_5^*$}
    \label{fig:two_box_state}
\end{figure}

\subsubsection{Two-Mode: Distance Estimation}
Distance estimation is conducted for the deterministic two-mode system \eqref{eq:two_mode_deterministic} with respect to the half-sphere unsafe set
\begin{equation}
    \label{eq:two_mode_unsafe}
    X_u = \{x \mid 0.4^2 \geq  (x_1 + 0.5)^2 + (x_2 + 0.5)^2 + (x_3 - 0.5)^2, \ x_3 \geq 0.5\}.
\end{equation}
The distance penalty $c(x, y) = \norm{x-y}^2_2$ is used in locations $\ell=1, 2$ with the unsafe set $X_u$. \ac{SDP} lower bounds for the distance $\min_\ell \min_{y \in X_u} \norm{x_{\ell} - y}_2^2$ (via \eqref{eq:peak_meas_dist}) are $p^*_{1:5} = [0, 0, 0, 2.799\times 10^{-3}, 7.942\times 10^{-3}]$.
The output of distance estimation is plotted in Figure \ref{fig:two_mode_distance}. The solid red half-sphere is the set $X_u$, and the corona surrounding $X_u$ is the set of all points with an $L_2$ distance at most $0.0891   = \sqrt{p_5^*}$ away from $X_u$.
% $5.119 \times 10^{-3} \times = p_5^*$
\begin{figure}[ht]
    \centering
    \includegraphics[width=0.75\linewidth]{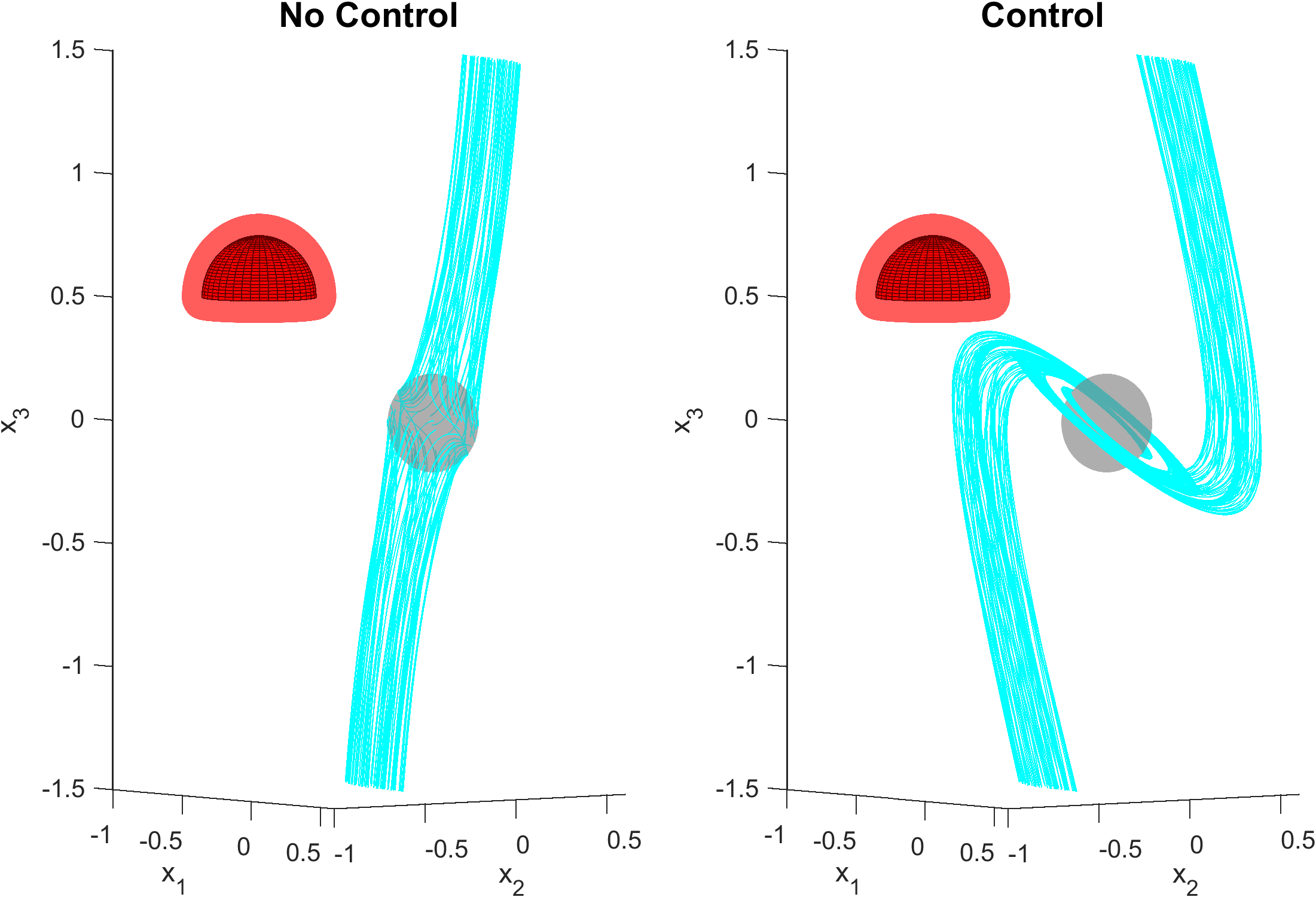}
    \caption{Deterministic Two-Mode Distance Bound of $\min_{y \in X_u}\norm{x-y}_2 \leq 0.0891   = \sqrt{p_5^*}$}
    \label{fig:two_mode_distance}
\end{figure}

% \urg{Reference \cite{prajna2004safety} has $p_1(x) = -\infty$ and $p_2(x) = x_1^2$. I will need to redo the plots and bounds to reflect this. This will also fullfill the requirement of non-identical cost functions between locations.}

% Example 2 of \cite{prajna2004safety} involves full-dimensional guard regions $S_e$, in which state transitions may occur in a nondeterministic manner once the trajectory reaches $S_e$. The expanded regions with a factor of $\epsilon = 0.01$ are,
% \begin{align*}
%     S_{(1,2)}^\epsilon &= \{x \mid 1.5^2-\epsilon \leq x_1^2/4 + x_2^2 + x_3^2 \leq 1.5^2 + \epsilon\} \\
%     S_{(2,1)}^\epsilon &= \{x \mid 0.2^2 - \epsilon \leq x_1^2 + x_2^2 + x_3^2 \leq 0.2^2 + \epsilon\} \\
%     X_{01}^\epsilon &= \{x \mid 0.2^2-\epsilon \leq x_1^2 + x_2^2 + x_3^2 \leq 0.2^2 + \epsilon\}.
% \end{align*}
% Upper bounds of $x_1^2$ for the above expanded regions are $p^*_{1:6} = [2.250, 0.7461, 0.5001, 0.4306, 0.4110, \varnothing]$.

% \urg{Extend this with a safety example and uncertainty}

\subsubsection{Two-Mode: Uncertainty}
Time-dependent uncertainty may be added to dynamics in \eqref{eq:two_mode_deterministic} by defining a process $w(t) \in [-1, 1]$ under the dynamics $\tilde{f}_\ell(t, x) = f_\ell(t, x) + [0;0;w]$. The \ac{SDP} bounds for $x_1^2$ when $w$ is realized as switching-type uncertainty is 
$p^*_{1:5} = [2.250, 1.4029, 1.0350, 0.9790, 0.9660].$ System trajectories and the order-5 bound of this noisy system are plotted in Figure \ref{fig:two_mode_noisy}.

\begin{figure}[!h]
    \centering
    \includegraphics[width=0.75\linewidth]{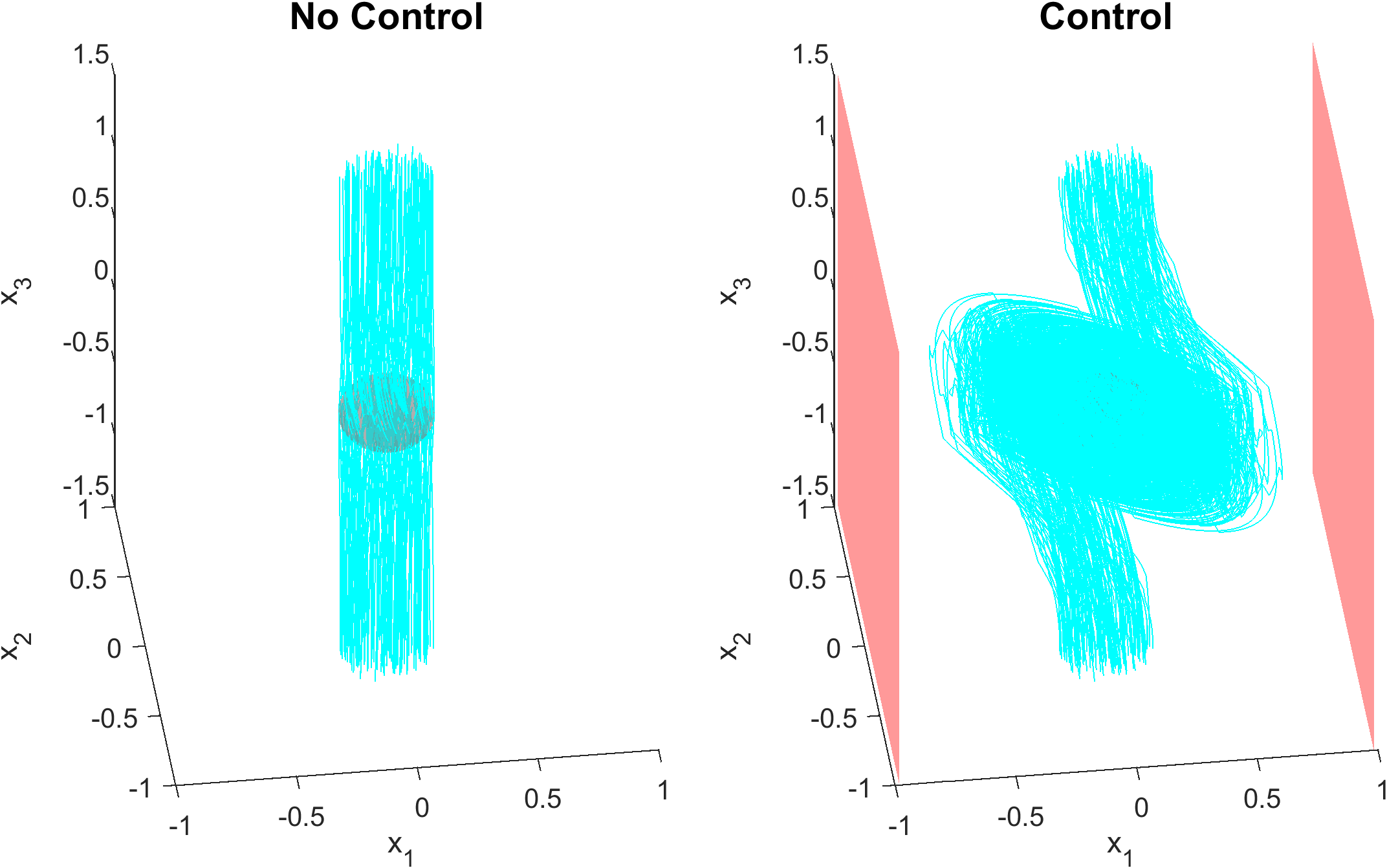}
    \caption{Noisy Two-Mode Bound of $x_1^2 \leq 0.9660 = p_5^*$}
    \label{fig:two_mode_noisy}
\end{figure}

\subsection{Right-Left Wrap}
This example has a single location $X = [-1, 1]^2$ with nontrivial reset maps. The dynamics in the single location are
\begin{equation}
\label{eq:rl_dynamics}
    \dot{x} =\begin{bmatrix}-x_2 + x_1*x_2 + 0.5 \\
         -x_2 - x_1 + x_1^3\end{bmatrix}.
\end{equation}
System \eqref{eq:rl_dynamics} has a stable equilibrium point at $(-0.8128, 0.2758)$ and a saddle point at $(-0.4288, 0.3499)$. The following right$\rightarrow$top and left$\rightarrow$bottom transitions are defined with Zeno caps of $N=5$

\begin{align}
    S_{\textrm{right}\rightarrow \textrm{top}} &= \{x \mid x_1 = 1, \ x_2 \in [-1, 1]\} &  R_{\textrm{right}\rightarrow \textrm{top}} &= [x_2, x_1]^T \label{eq:rl_transitions}\\
    S_{\textrm{left}\rightarrow \textrm{bottom}} &= \{x \mid x_1 = -1, \ x_2 \in [-1, 1]\} & R_{\textrm{left}\rightarrow \textrm{bottom}} &= [1-x_2, x_1]^T.\nonumber
\end{align}
The set $X$ is invariant under these state transitions.
A peak estimation task to maximize $p(x) = -(x_1+0.5)^2 + (x_2+0.5)^2$ is defined on system dynamics \eqref{eq:rl_dynamics} starting from the initial set $X_0 = \{x \mid 0.2^2 = (x_1 - 0.5)^2 + (x_2 + 0.3)^2 \}$ for a $T = 5$ time horizon. \ac{SDP} upper bounds for this objective are $p^*_{1:6} = [0,0,-0.3644, -0.5259, -0.5659, -0.5721]$.

Figure \ref{fig:rl} plots the \ac{ODE} system dynamics in \eqref{eq:rl_dynamics}. Figure \ref{fig:rl_bound} plots hybrid system dynamics in cyan, starting from the black-circle $X_0$. The $p_6^*$ bound is  indicated in the red circle of radius $\sqrt{-p_6^*} = 0.7564$, in which no considered hybrid system trajectory is contained.

\begin{figure}[h]
    \centering
     \begin{subfigure}[b]{0.45\textwidth}
         \centering
         \includegraphics[width=\textwidth]{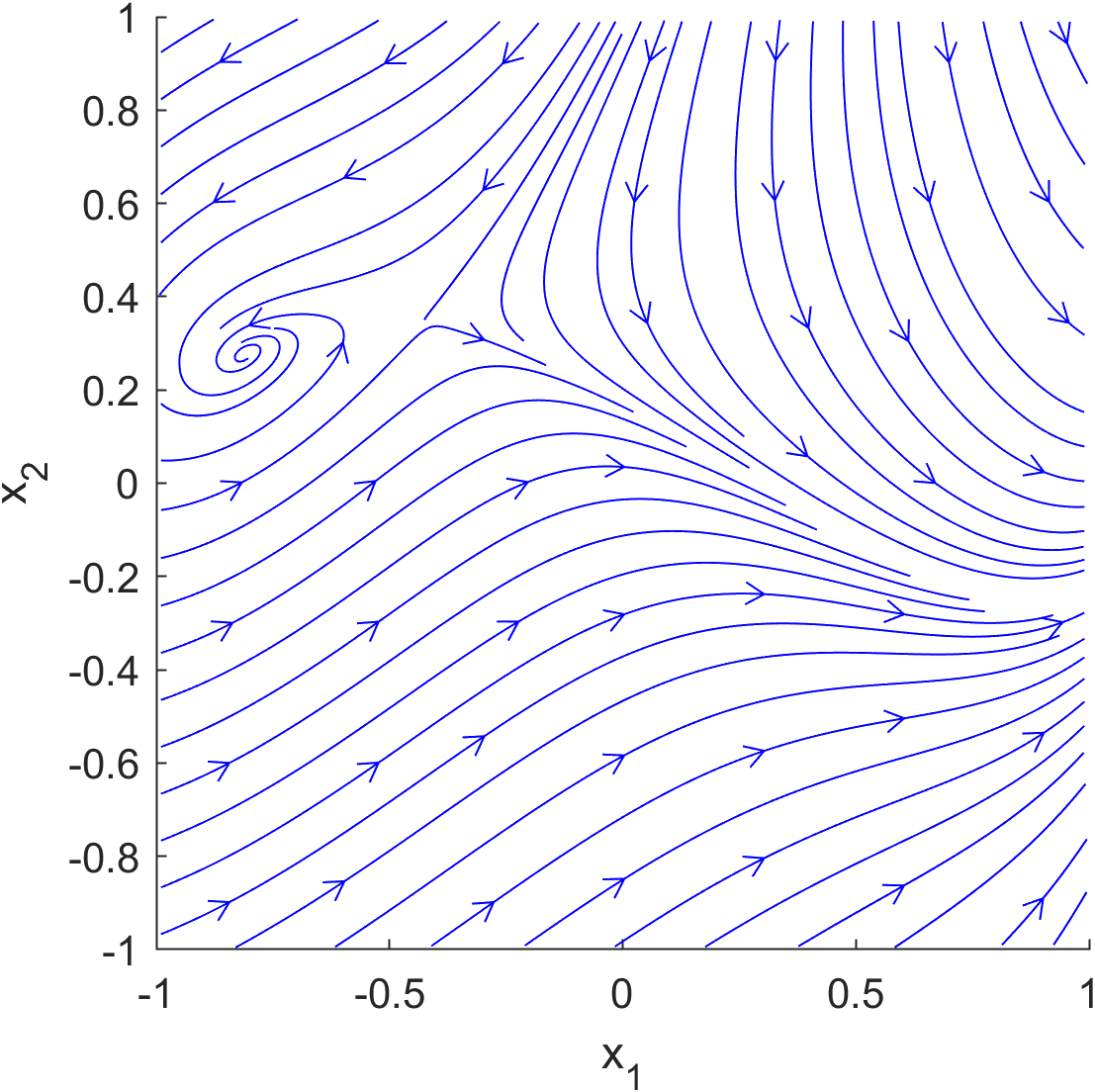}
         \caption{System dynamics in \eqref{eq:rl_dynamics}}
         \label{fig:rl_stream}
     \end{subfigure}
          \hfill
     \begin{subfigure}[b]{0.45\textwidth}
         \centering
         \includegraphics[width=\textwidth]{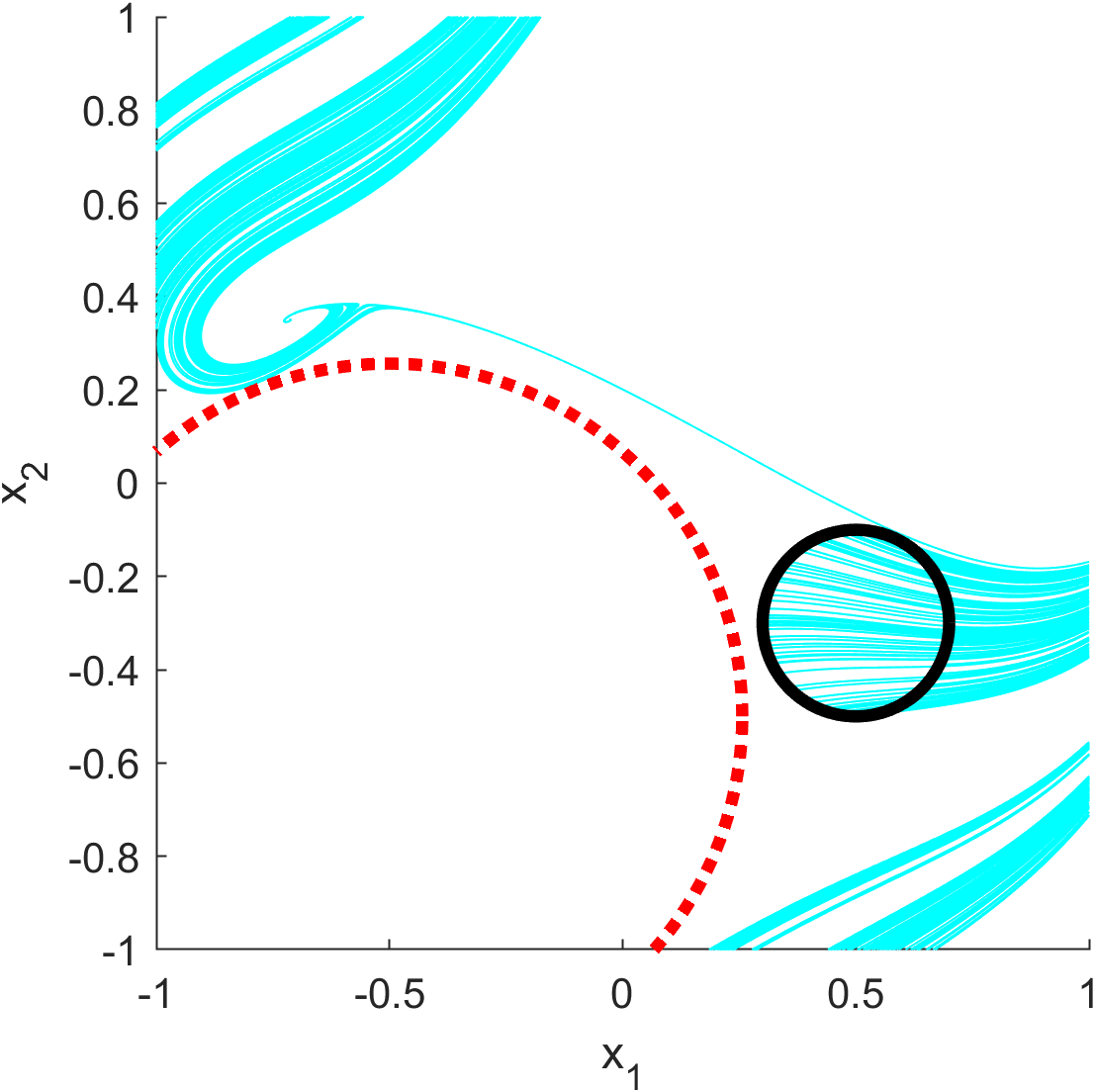}
         \caption{Bound of $p_6^* = -0.5721$}
         \label{fig:rl_bound}
     \end{subfigure}
    \caption{Peak Estimation of Right-Left Wrap Dynamics \eqref{eq:rl_dynamics} and \eqref{eq:rl_transitions}}
    \label{fig:rl}
\end{figure}

\section{Conclusion}
\label{sec:conclusion}
% \urg{Summarize the paper}

An existing peak estimation framework for \ac{ODE} systems was extended in this paper to hybrid systems. A hierarchy of \acp{SDP} result in a (convergent) decreasing sequence of upper bounds to the true peak value. Extensions to the hybrid peak estimation framework, such as bounding the distance to unsafe sets \cite{miller2021distance} and estimation of systems with uncertainty \cite{miller2021uncertain}, can be accomplished by modifying equations in the \ac{LP}. Future work includes performing peak-minimizing ($L_1$-optimal) control of hybrid systems \cite{molina2022equivalent}, data-driven peak estimation of hybrid systems \cite{miller2021facial_short}, applying numerical techniques to perform peak estimation on more complicated systems (e.g., rigid body dynamics in robotics), and generalizing analysis of deterministic hybrid dynamics to Markov Decision Processes.

% \urg{Ensure that references are properly capitalized. Use ISO4 abbreviations if necessary to save space.}
\bibliographystyle{IEEEtran}
\bibliography{hybrid.bib}

\end{document}